\documentclass[12pt,a4paper]{amsart}
\usepackage[cp1250]{inputenc}
\setlength{\parskip}{.05in}
\sloppy 
\clubpenalty=10000 
\brokenpenalty=1000 
\widowpenalty=10000 
\usepackage{amsthm}
\usepackage{amssymb,amsmath}
\usepackage{amssymb}
\usepackage{graphicx}
\usepackage{mathrsfs}
\usepackage{color}
\renewcommand{\leq}{\leqslant}
\renewcommand{\geq}{\geqslant}

\newcommand{\rr}{\mathbb R}
\newcommand{\nn}{\mathbb N}
\newcommand{\vf}{\varphi}

\newcommand{\argmin}{\operatorname{argmin}}
\numberwithin{equation}{section}

\newcounter{thm}[section]

\newtheorem{tw}[thm]{Theorem}

\newtheorem{rem}[thm]{Remark}

\newtheorem{lemat}[thm]{Lemma}
\newtheorem{cor}[thm]{Corollary}

\newtheorem{exa}[thm]{Example}

\theoremstyle{definition}

\begin{document}

\title[Exponential convexifing of polynomials]{Exponential convexifying  of polynomials }
 \keywords{Polynomial, semialgebraic set, convex function, exponential function, lower critical point.}
  \subjclass[2010]{Primary 11E25, 12D15; Secondary 26B25.}
\date{\today}

\author[K. Kurdyka]{Krzysztof Kurdyka} 
\address{Krzysztof Kurdyka, Laboratoire de Mathematiques (LAMA) Universit\'e Savoie Mont Blanc,  UMR-5127 de CNRS 
73-376 Le Bourget-du-Lac cedex FRANCE}
\email{Krzysztof.Kurdyka@univ-savoie.fr}

\author[K. Kuta]{Katarzyna Kuta}
\address{Katarzyna Kuta, Faculty of Mathematics and Computer Science, University of \L \'od\'z, 
S. Banacha 22, 90-238 \L \'od\'z, POLAND}
\email{kkuta@math.uni.lodz.pl}

\author[S. Spodzieja]{Stanis{\l}aw Spodzieja}
\address{Stanis{\l}aw Spodzieja, Faculty of Mathematics and Computer Science, University of \L \'od\'z, 
S. Banacha 22, 90-238 \L \'od\'z, POLAND}
\email{spodziej@math.uni.lodz.pl}



\begin{abstract} 

Let $X\subset\rr^n$ be a convex closed and semialgebraic set and let $f$ be a polynomial
 positive on $X$. We prove that there exists an exponent  $N\geq 1$, such that for any $\xi\in\rr^n$ the function $\varphi_N(x)=e^{N|x-\xi|^2}f(x)$ is  strongly convex on $X$. When $X$ is unbounded  we have to assume also that the leading form of $f$ is positive in $\rr^n\setminus\{0\}$.  We obtain   strong convexity of $\varPhi_N(x)=e^{e^{N|x|^2}}f(x)$ on possibly unbounded $X$, provided $N$ is sufficiently large,    assuming only  that  $f$ is positive on $X$. We apply these results for searching critical points of polynomials on convex closed semialgebraic sets. 
\end{abstract}
\maketitle

\section{Introduction}


In \cite{KS} we considered several questions  concerning convexification of a polynomial $f$
  which is positive on a closed convex set $X\subset \mathbb R^n$.
  One of the main results in  \cite{KS},  is the following [Theorem 5.1]: \emph{if $X$ is a compact set than there exists a positive integer $N$  such that the function 
\begin{equation}\label{conv1}
\phi_N(x)=(1+|x|^2)^Nf(x)
\end{equation}
is strongly convex on $X$}. Moreover,  explicit estimates for the exponent $N$ were given in  \cite{KS}. They depend on the diameter of $X$, the size of coefficients of the polynomial $f$ and on the minimum of $f$ on $X$. 
In fact a  stronger version of  \eqref{conv1} was  given  in  \cite{KS}; \emph{there exists  an integer  $N$, which can be explicitly estimated, such  that the polynomials
\begin{equation*}
\phi_{N,\xi}=(1+|x-\xi|^2)^Nf(x),\quad \xi\in X,
\end{equation*}
are strongly convex on $X$}. The fact that $N$ can be chosen  independent  of  $\xi$  was  crucial for  a construction
of an algorithm which for a given polynomial $f$, positive in the convex compact semialgebraic set $X$, produces a sequence $a_\nu\in X$ starting from an arbitrary point $a_0\in X$, defined by induction: $a_\nu=\argmin_{X}\phi_{N,a_{\nu-1}}$, i.e., $a_{\nu}\in X$ is the unique point of $X$ at which $\phi_{N,a_{\nu-1}}$ has a global minimum on $X$. The sequence $a_\nu$ converges to a lower critical point of $f$ on  $X$ (see \cite[Theorem 7.5]{KS}).

In the case of non-compact closed convex set $X$ the  results mentioned   above require an  additional assumption, that the \emph{leading form} $f_d$ of  $f$, 
satisfy  
\begin{equation}\label{eqpositivityfd}
f_d(x)>0\quad\hbox{for }x\in\rr^n\setminus\{0\}.
\end{equation}
Under this assumption we have that: \emph{if a  polynomial $f$ is positive on $X$ then for any $R>0$ there exists $N_0$ such that for each $\xi\in X$, $|\xi|\leq R$,
$N>N_0$ the polynomial $\phi_{N,\xi}$ is strongly convex on $X$}.

The assumption \eqref{eqpositivityfd} is necessary for local convexity of $\phi_{N,\xi}$ in a neighborhood of infinity, see \cite[Proposition 6.3]{KS}. However, this assumption is not sufficient to obtain  convexity of the polynomial $\phi_{N,\xi}$ for some fixed  $N>0$ independent of  $\xi\in X$. 
For instance 
the polynomial $f(x)=1+x^2$, has this property, cf. \cite[Example 4.5]{KS}. 
%
%


The  main goal  of this paper is to study convexification of polynomials functions  by exponential factors of the form
$e^{N|x-\xi|^2}$ or by double exponential of the form $e^{e^{N|x-\xi|^{2}}}$.
 Surprisingly they  play distinct roles.
We set 
 $$
\varphi _{N,\xi}(x):=e^{N|x-\xi|^2}f(x).
$$ 
and  prove the following  (see Theorem \ref{uwypuklanie na zwartym} and Corollary \ref{uwypuklanie na zwartymcor2}): 
\emph{if  a polynomial $f$ is positive on a compact and convex set $X\subset \mathbb R^n$, 
than there exists effectively computed number $N_0$ such that for any $N> N_0$ and $\xi\in \rr^n$ the function $\varphi _{N,\xi}(x)$ 
 is strongly convex on $X$.} 


If  $X$ is not  compact, we obtain the above assertions under the assumption 
 \eqref{eqpositivityfd}, see Theorem \ref{convexonconvexsetnewvar}. 
  In general the assumption \eqref{eqpositivityfd} 
  can not be ommited as we show in Example \ref{exacounter}. 
 
Surprisingly convexification in the noncompact case  without assumption \eqref{eqpositivityfd}  is possible using double exponential factors. Namely  in Theorems \ref{tedoubleexp} and \ref{convexonconvexsetnewxdoublevarsemi} we prove that:
\emph{if $X\subset \rr^n$ is a convex and closed semialgebraic set and $f$ is a polynomial positive on $X$, then for any $R>0$ there exists effectively computed number $N_0$ such that for any $N> N_0$ and any $\xi\in \rr^n$, $|\xi|\leq R$, the function 
$$
\varPhi_{N,\xi}(x):=e^{e^{N|x-\xi|^{2}}}f(x)
$$ 
is strongly convex on $X$.} 
 
In the case when $X$ is a convex and closed set, but non necessary semialgebraic, the result  still holds  (Theorems \ref{tedoubleexp2} and \ref{convexonconvexsetnewxdouble}) under an additional assumption 
\begin{equation}\label{eqestmonX}
\inf\{f(x): x\in X\}\ge  m>0.
\end{equation}

In the above    theorems  one can  replace $\varPhi_{N,\xi}$ by  the function 
$$
\Phi_{N,\xi}(x):=e^{Ne^{|x-\xi|^{2}}}f(x).
$$
 
 It turns out that convexification of polynomials using exponential function  is somehow more natural and powerful than the convexification   by the  factors of the form 
 $(1+|x-\xi|^2)^N$ done in \cite{KS}. In particular it applies also to the noncompact case  and the explicit formulae for the exponent $N$ are nicer.
 
  We  believe that the results mentioned above could be of    interest, also to  study   o-minimal structures expanded by the exponent function. It fits particularly to the structure 
 $\rr_{\exp}$  semialgebraic sets expanded by the  exponent function. The remarkable fact that $\rr_{\exp}$ is indeed an o-minimal structure
 was established by A. Wilkie \cite{W}.
 It would be interesting  to explain
  a different power of exponential and double exponential  for convexification.

The main difficulty when determining explicitely the number $N$ such that the function $\varphi_N$ is strongly convex on a convex  compact set $X$, comes from an effective estimation of the number $m$ in \eqref{eqestmonX} 
and the number $R=\max\{|x|:x\in X\}$. Using results of G.~Jeronimo, D. Perrucci, E.~Tsigaridas \cite{JPT}  we show in  Theorem \ref{uwypuklaniecalkowitychna zwartym}) and Theorem \ref{uwypuklanie na zwartym} how it is feasible when $X$ is a compact semialgebraic set described by polynomial inequalities with integer coefficients and $f$ is also a polynomial with integer coefficients (see Theorem \ref{uwypuklanie na zwartym calk}).

As an application to optimization  we propose an algorithm which produces, starting from an arbitrary point $a_0\in X$,
  a sequence $a_\nu \in X$ which tends to a lower critical point of a polynomial $f$ restricted to 
  $X$ or to infinity. We assume that 
 $X\subset \rr^n$ is a closed convex  semialgebraic set and $f$ a polynomial  which is bounded from below on $X$.  Then by adding to $f$ an appropriate constant we may assume that  $f\ge m> 0$ on $X$. If $X$ is unbounded we assume also condition \eqref{eqpositivityfd}.
Hence by the above mentioned theorems we  obtain strong convexity of $\varphi_\xi(x)=e^{|x-\xi|^2}f(x)$ for $\xi\in X$.
Let us choose any  $a_0\in X$ and set by induction: 
$a_\nu=\argmin_{X}\varphi_{N,a_{\nu-1}}$.  Then we prove that  the sequence  $a_\nu $   tends to a lower critical point of a polynomial $f$ restricted to 
  $X$ or to infinity.  Note that computing $a_\nu$, that is minimizing $\varphi_{N,a_{\nu-1}}$  on $X$,  is usually easier since the function is convex.
   This type of algorithm, based on convexification,  is called  sometimes {\it proximal}, see for instance \cite{Bolte}. 
  Observe  that computing the critical point of $\varphi_{N,a_{\nu-1}}$ involves only algebraic equations. 
%

The paper is organized as follows. In Section \ref{one variable} we prove that the function $\varphi _N$ in one variable  is strongly convex on a closed interval $I\subset \mathbb R$, provided $f(x)>m$ for $x\in I$ and some $m>0$ and $N\in\mathbb{R}$ is sufficiently large. We also estimate from above the number $N$.  In Section \ref{Sect3} we consider this problem in  the several variables case on a compact convex set $X$ (see Theorem \ref{uwypuklanie na zwartym}). In  Sections \ref{Sect44} and \ref{secdoubleexp} we consider the case when the set $X$ is not compact. 

\section{Convexifying polynomials}\label{one variable}
\subsection{Convexifying $C^2$-functions in one variable}
In this section we prove that if $f$ is a function of class $C^2$ positive on a closed interval $I\subset \mathbb R$ (not necessary compact), then for $N$ large enough the function $t\mapsto e^{Nt^2}f(t)$ is strongly convex on   $I$.

Let $f: \mathbb R\rightarrow \mathbb R$ be $C^2$ function. For any $N\in \mathbb R $ and $p,q\in\rr$ we define the following function:
$$
\varphi _{N,p,q}(t):=e^{N(t^2+pt+q)}f(t), \ \ t\in \mathbb R.
$$

For positive numbers $m,D$ we put
$$ 
\mathcal{N}(m,D):=\frac{D}{2m}+\frac{D^2}{2m^2}.
$$

\begin{lemat}\label{lemat dla funkcji2}
Let $f$ be a function of class $C^2$, which is positive on a closed interval $I\subset \mathbb R$. Let 
$m,D\in\mathbb{R}$ be such that 
\begin{equation}\label{minimum f}
	0<m\leq \inf\{f(t): t \in I\},
\end{equation}
and 
\begin{equation}\label{ograniczenie pochodnych}
|f'(t)|\leq D, \quad |f''(t)|\leq D \quad for \quad t \in I.
\end{equation} 
Assume that 
 $p^2\leq 4q$ and
\begin{equation}\label{nierownosc dla N2}
N>\mathcal{N}(m,D)
\end{equation} 
then
$$\varphi _{N,p,q}'' (t)\geq -\frac{D^2}{m}-D+2Nm>0$$ for $t \in I$, thus $\varphi _{N,p,q}$ is strongly convex on $I$.
\end{lemat}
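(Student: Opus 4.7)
The plan is to compute $\varphi_{N,p,q}''$ by two applications of the product rule, and then to recognize the resulting bracket as a quadratic in the single quantity $g'(t)$, where $g(t) := N(t^2+pt+q)$.

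Setting $g'(t) = N(2t+p)$ and $g''(t) = 2N$, a direct differentiation of $\varphi_{N,p,q} = e^{g}f$ yields
\[
\varphi_{N,p,q}''(t) = e^{g(t)}\Bigl[f(t)\,(g'(t))^2 + 2f'(t)\,g'(t) + 2Nf(t) + f''(t)\Bigr].
\]
Viewing the bracket as a quadratic $Q(A) = f(t)A^2 + 2f'(t)A + (2Nf(t)+f''(t))$ in the auxiliary variable $A := g'(t)$, and using $f(t) \geq m > 0$, I would complete the square to bound it below by $2Nf(t) + f''(t) - (f'(t))^2/f(t)$. The hypotheses \eqref{minimum f} and \eqref{ograniczenie pochodnych} then supply $2Nf(t) \geq 2Nm$, $f''(t) \geq -D$, and $(f'(t))^2/f(t) \leq D^2/m$, so the bracket is at least $2Nm - D - D^2/m$. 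The assumption $N > \mathcal{N}(m,D) = D/(2m) + D^2/(2m^2)$ is engineered precisely so that this lower bound is strictly positive.

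To pass from the bracket to $\varphi_{N,p,q}''(t)$ itself, I would use the remaining assumption $p^2 \leq 4q$: it forces $t^2+pt+q = (t+p/2)^2 + (q-p^2/4) \geq 0$, so $g(t) \geq 0$ and $e^{g(t)} \geq 1$. Multiplying a positive lower bound by a factor $\geq 1$ preserves it, so the estimate $\varphi_{N,p,q}''(t) \geq -D^2/m - D + 2Nm > 0$ follows on $I$, giving the claimed strong convexity.

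The only nontrivial conceptual step is spotting the quadratic-in-$g'(t)$ structure of the bracket and using the minimum $c - b^2/a$ of $aA^2 + 2bA + c$ with $a>0$; everything else reduces to substituting the uniform bounds on $f$, $f'$, $f''$ on $I$. The role of the auxiliary assumption $p^2 \leq 4q$ is purely cosmetic at this stage---it lets us discard the exponential prefactor without tracking its sign on $I$---and one should expect it to be essential later when this lemma is combined with translations $x \mapsto x - \xi$ in several variables.
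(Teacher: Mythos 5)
Your proof is correct and follows essentially the same route as the paper: both reduce the bracket in $\varphi_{N,p,q}''$ to a quadratic whose minimum is $2Nm - D - D^2/m$, the only cosmetic difference being that you complete the square on the exact quadratic $f(t)A^2+2f'(t)A+(2Nf(t)+f''(t))$ in $A=g'(t)$ and bound afterwards, while the paper first bounds the coefficients and then minimizes the resulting quadratic in $\lambda=|2t+p|$. You also make explicit the role of $p^2\leq 4q$ in guaranteeing $e^{N(t^2+pt+q)}\geq 1$, a step the paper uses silently.
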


\begin{proof}
By definition of $\varphi_{N,p,q}$ we have
\begin{equation*}\label{eqwaznewprzykladzie}
\varphi_{N,p,q}''(t)=e^{N(t^2+pt+q)}[N^2(2t+p)^2f(t)+2N(2t+p)f'(t)+2Nf(t)+f''(t)].
\end{equation*}
Hence, from the assumptions we obtain
\begin{equation}\label{oszacowanie}
\varphi_{N,p,q}''(t)\geq e^{N(t^2+pt+q)}[N^2(2t+p)^2m-2N|2t+p|D+2Nm-D] 
\end{equation}
for $t\in I$.Note that  the function
$$
\mathbb{R} \ni \lambda \mapsto N^2m\lambda^2-2N D\lambda+2Nm-D$$
attains its minimum, equal $-\frac{D^2}{m}-D+2Nm$, at the point $\lambda =\frac{D}{Nm}$. Thus for
$
N>\mathcal{N}(m,D)
$
we have $$N^2m\lambda^2-2ND|\lambda|+2Nm-D>0$$ for any $\lambda\in \mathbb R$. 
Therefore
$$
\varphi_{N,p,q}''(t)\geq -\frac{D^2}{m}-D+2Nm>0\quad \hbox{for }t\in I,
$$
which implies that $\varphi_{N,p,q}$ is strongly convex on $I$.
\end{proof}

From Lemma \ref{lemat dla funkcji2} we immediately obtain

\begin{cor}\label{cor lemat dla funkcji2}
Let $f$ be a function of class $C^2$ on a closed interval $I\subset \mathbb R$. Let 
 $m,D\in\mathbb{R}$ be such that 
\begin{equation}
	m< \inf\{f(t): t \in I\},
\end{equation}
and 
\begin{equation}\label{ograniczenie pochodnychdwa}
|f'(t)|\leq  D, \quad |f''(t)|\leq  D, \quad \hbox{for} \quad t \in I.
\end{equation} 
Than for any $\xi\in\rr$ and any $N\geq 1$ the   function 
$$
\psi_{N,\xi}(t)=e^{N(t-\xi)^2}[f(t)-m+D],\quad t\in I
$$
is strongly convex on $I$. In particular the function
$$
\varphi(t)=e^{(t-\xi)^2}[f(t)-m+D],\quad t\in I,
$$
is strongly convex on $I$.
\end{cor}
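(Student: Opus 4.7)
The plan is to reduce the statement directly to Lemma \ref{lemat dla funkcji2}, applied to the shifted function $\tilde f(t) := f(t) - m + D$ and to the quadratic $t^2 + pt + q = (t-\xi)^2$, i.e.\ with $p = -2\xi$ and $q = \xi^2$ (for which $p^2 = 4q$, so the condition $p^2 \le 4q$ of the lemma is automatic).

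First I would verify that $\tilde f$ satisfies the hypotheses of the lemma, with the constant $D$ playing a double role. Since $\tilde f' = f'$ and $\tilde f'' = f''$, the bound $|\tilde f'|, |\tilde f''| \le D$ is inherited from \eqref{ograniczenie pochodnychdwa}. For the lower bound, the strict assumption $m < \inf_I f$ gives $\tilde f(t) = f(t) - m + D > D$ on $I$, so one can choose a constant $m' > D$ with $0 < m' \le \inf_I \tilde f$.

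Next I would check the threshold. With these parameters,
$$
\mathcal{N}(m', D) = \frac{D}{2m'} + \frac{D^2}{2(m')^2} < \frac{1}{2} + \frac{1}{2} = 1,
$$
the strict inequality coming from $m' > D$. Hence the hypothesis $N \ge 1$ already implies $N > \mathcal{N}(m', D)$, which is precisely condition \eqref{nierownosc dla N2}. Applying Lemma \ref{lemat dla funkcji2} to $\tilde f$ with these $m'$, $D$, $p$, $q$, $N$ yields strong convexity of $\psi_{N,\xi}(t) = e^{N(t-\xi)^2}\tilde f(t)$ on $I$, and specialising to $N = 1$ gives the particular statement about $\varphi$. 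The only delicate point to flag is the strict inequality $m < \inf_I f$: it is exactly what forces $m' > D$ and hence $\mathcal{N}(m', D) < 1$, allowing the sharp threshold $N \ge 1$ rather than $N > 1$; beyond this bookkeeping, the corollary is a one-line arithmetic consequence of the lemma and I foresee no substantive obstacle.
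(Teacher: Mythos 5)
Your proof is correct and follows the paper's intended route: the corollary is stated as an immediate consequence of Lemma \ref{lemat dla funkcji2}, and you have correctly filled in the bookkeeping---applying the lemma to $\tilde f = f - m + D$ with $p = -2\xi$, $q = \xi^2$ (so $p^2 = 4q$), observing that the strict hypothesis $m < \inf_I f$ forces $\inf_I \tilde f > D$, hence a choice of $m' > D$, hence $\mathcal{N}(m',D) < 1 \le N$.
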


\subsection{Convexifying polynomials in several variables}\label{Sect3}

We will show that the function $\varphi _N$ in $n$ variables is strongly convex on a compact convex set $X\subset\mathbb{R}^n$, provided $f$ is a polynomial positive on $X$ and $N$ is suficiently large.

Let 
$f\in \mathbb R [x]$ be a real polynomial in $x=(x_1, \ldots , x_n)$ of the form
\begin{equation}\label{funkcja f}
f=\sum_{j=0}^d\sum_{|\nu |=j}a_{\nu }x^\nu ,
\end{equation}
where $a_\nu  \in \mathbb R,$ $x^\nu =x^{\nu_{1}}_1\cdots x^{\nu_{n}}_n$ and $|\nu |=\nu _1 + \cdots + \nu _n$ for $\nu =(\nu _1, \cdots , \nu _n)\in \mathbb N^n$ (we assume that $0\in \mathbb N$). 
For $R>0$ we denote
$$
D_n (f,R):=\max \bigg\{1,\sum_{j=1}^d\sum_{|\nu |=j}j|a_ \nu |R^{j-1};\sum_{j=1}^d\sum_{|\nu |=j}j(j-1)|a_ \nu |R^{j-2}\bigg\}.
$$

\begin{tw}\label{uwypuklanie na zwartym}
Let $f\in \mathbb R [x]$ be a polynomial which is positive on a compact and convex set $X\subset \mathbb R^n$. Let $R=\max\{|x|: x \in X\}$ and
$$0<m\leq \min\{f(x): x\in X\}.$$
Than for any $\xi\in \rr^n$, any $D \geq D_n (f,R)$ and any real $N> \mathcal{N}(m,D)$ the function $\varphi _{N,\xi}(x):=e^{N|x-\xi|^{2}}f(x)$ is strongly convex on $X$.
\end{tw}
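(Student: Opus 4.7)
The plan is to reduce this multivariate statement to the one-variable Lemma~\ref{lemat dla funkcji2} by restricting $\varphi_{N,\xi}$ to arbitrary line segments inside $X$. Fix $\xi\in\rr^n$, a base point $x_0\in X$, and a unit vector $v\in\rr^n$. Set $I:=\{t\in\rr:x_0+tv\in X\}$, which is a closed interval (possibly a single point) because $X$ is convex and compact, and consider $h(t):=\varphi_{N,\xi}(x_0+tv)$. Expanding $|x_0+tv-\xi|^2=t^2+pt+q$ with $p=2\langle v,x_0-\xi\rangle$ and $q=|x_0-\xi|^2$, Cauchy--Schwarz immediately gives $p^2=4\langle v,x_0-\xi\rangle^2\leq 4|x_0-\xi|^2=4q$. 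Thus $h$ coincides with the function $\varphi_{N,p,q}$ of Lemma~\ref{lemat dla funkcji2} applied to the univariate polynomial $g(t):=f(x_0+tv)$.

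Next I verify the hypotheses of Lemma~\ref{lemat dla funkcji2} for $g$ on $I$. Since $x_0+tv\in X$ for $t\in I$, the condition \eqref{minimum f} is immediate: $g(t)\geq m$. For \eqref{ograniczenie pochodnych}, the chain rule yields $g'(t)=\langle v,\nabla f(x)\rangle$ and $g''(t)=v^{T}\nabla^{2}f(x)v$ at $x=x_0+tv$, so by Cauchy--Schwarz and $|v|=1$ it suffices to bound $|\nabla f(x)|$ and $\sum_{i,k}|\partial_i\partial_k f(x)|$ on $X$. Differentiating the expansion \eqref{funkcja f} termwise and using $|x_i|\leq|x|\leq R$ on $X$, together with the combinatorial identity $\sum_{i,k}\nu_i(\nu_k-\delta_{ik})=|\nu|(|\nu|-1)$, one gets
\[
|\nabla f(x)|\leq\sum_{j=1}^{d}\sum_{|\nu|=j}j|a_\nu|R^{j-1},\qquad
\sum_{i,k}|\partial_i\partial_k f(x)|\leq\sum_{j=2}^{d}\sum_{|\nu|=j}j(j-1)|a_\nu|R^{j-2},
\]
and each right-hand side is dominated by $D_n(f,R)\leq D$.

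With the three hypotheses of Lemma~\ref{lemat dla funkcji2} verified and $N>\mathcal{N}(m,D)$ by assumption, the lemma gives $h''(t)\geq -D^{2}/m-D+2Nm>0$ on $I$, and this lower bound is a positive constant $c$ independent of the base point $x_0$ and of the direction $v$. Since $x_0\in X$ and the unit vector $v$ were arbitrary, this yields a uniform positive lower bound $w^{T}\nabla^{2}\varphi_{N,\xi}(x)w\geq c|w|^{2}$ valid for all $x\in X$ and $w\in\rr^n$, which is exactly the strong convexity of $\varphi_{N,\xi}$ on the convex set $X$.

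The only real obstacle is the bookkeeping in the derivative bounds against $D_n(f,R)$; the mildly subtle point, worth flagging to the reader, is that the matching condition $p^2\leq 4q$ from Lemma~\ref{lemat dla funkcji2} is precisely what forces the exponent $N(t^2+pt+q)=N|x_0+tv-\xi|^2$ to stay nonnegative, so that the exponential factor appearing in $\varphi_{N,p,q}''$ is $\geq 1$ and cannot degrade the lower bound on $h''$.
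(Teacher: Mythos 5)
Your argument is correct and matches the paper's own proof in essence: both reduce to Lemma~\ref{lemat dla funkcji2} by restricting $\varphi_{N,\xi}$ to affine lines meeting $X$, bound the resulting one-variable derivatives by $D_n(f,R)$, and verify $p^2\leq 4q$ (you via Cauchy--Schwarz, the paper via nonnegativity of $|\gamma_{\alpha,\beta}(t)-\xi|^2$). The only cosmetic difference is the line parametrization: you use a base point $x_0\in X$ plus a unit direction $v$, while the paper uses $\gamma_{\alpha,\beta}(t)=\beta t+\alpha$ with $\alpha\perp\beta$ and $|\beta|=1$.
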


\begin{proof}
 Let 
\begin{equation}\label{eqdefA}
A:=\{(\alpha,\beta) \in \mathbb R^n \times \mathbb R^n: \langle \alpha, \beta \rangle=0,\; |\beta|=1\},
\end{equation}
where $\langle \cdot,\cdot \rangle$ denotes the standard scalar product on $\mathbb R^n.$ Set
$$\gamma _{\alpha, \beta}(t):=\beta t + \alpha, \ \ t\in \mathbb R.$$
Clearly, the family of all curves $\gamma _{\alpha, \beta}$, where $(\alpha,\beta)\in A$ describes all affine lines in $\mathbb R^n.$
Denote by   $B\subset A$ the set of all $(\alpha,\beta)\in A$ such that the line parametrized by $\gamma _{\alpha, \beta}$ intersects the set X. Then $B$ is a compact set and $$B\subset \{(\alpha ,\beta )\in A: |\alpha |\leq R\}.$$
We will prove that for any $(\alpha ,\beta )\in B$ and $N > \mathcal{N}(m,D)$ the function $\varphi_{N,\xi} \circ \gamma _{ \alpha , \beta }$ is strongly convex on 
$$I_{\alpha ,\beta }:=\{t\in \mathbb R: \gamma_{\alpha ,\beta }(t) \in X\}.$$
Because $X$ is a compact and convex set, so $I_{\alpha ,\beta }$ is a closed interval or only one point. 

It is obvious that for $(\alpha;\beta) \in B$ the set $\{t\in \mathbb R: |\gamma _{\alpha ,\beta }(t)|\leq R\}$ is an interval, which contains the point 0 or it is equal to $\{0\}$. Denote  this interval by $[-R_{\alpha ,\beta },R_{\alpha ,\beta }]$ (under convention $[0,0]=\{0\}$). Then 
$$I_{\alpha ,\beta }\subset [-R_{\alpha ,\beta },R_{\alpha ,\beta }]\subset [-R;R].$$

Let $f$ be of the form (\ref{funkcja f}). Than for $t\in I_{\alpha, \beta}$ we have
$|\gamma _{\alpha ,\beta } (t)| \leq R $.
Let us fix $(\alpha, \beta) \in B$. 
Then
$$
|(f\circ \gamma _{\alpha ,\beta })'(t)|\leq \sum_{j=1}^d\sum_{|\nu |=j}j|a_ \nu |R^{j-1}
$$
and
$$
|(f\circ \gamma _{\alpha ,\beta })''(t)|\leq \sum_{j=1}^d\sum_{|\nu |\leq j}j(j-1)|a_ \nu |R^{j-2}.
$$
Consequently, 
$$
|(f\circ \gamma _{\alpha ,\beta })'(t)|\leq D, \ \ \ |(f\circ \gamma _{\alpha ,\beta })''(t)|\leq D \ \ \ \hbox{for} \ \ \ t\in I_{\alpha, \beta}.$$

Take any $\xi\in\rr^n$, then
\begin{multline*}
|\gamma _{\alpha ,\beta }(t)-\xi|^2=\langle \beta t+\alpha-\xi ,\beta t+\alpha-\xi \rangle \\
= t^2-2\langle \beta,\xi\rangle t+|\alpha| ^2-2\langle \alpha,\xi\rangle+|\xi|^2,
\end{multline*}
then for $p=-2\langle \beta,\xi\rangle$ and $q=|\alpha| ^2-2\langle \alpha,\xi\rangle+|\xi|^2$, we have $p^2\leq 4q$ and
$$
\varphi _N \circ \gamma _{\alpha, \beta}(t)=e^{N(t^2+pt+q)}f(\gamma _{\alpha, \beta}(t)).
$$
So, by Lemma {\ref{lemat dla funkcji2}} we get that $\left(\varphi _N \circ \gamma _{\alpha, \beta}\right)''(t)\geq  -\frac{D^2}{m}-D+2Nm>0$ for $t\in I_{\alpha,\beta}$ and $\varphi_{N,\xi}$ is strongly convex on $X$, 
 provided $N>\mathcal{N}(m,D)$.
\end{proof}
From Theorem \ref{uwypuklanie na zwartym}  we obtain the following corollary.

\begin{cor}\label{uwypuklanie na zwartymcor2}
Let $f\in \mathbb R [x]$  and let  $X\subset \mathbb R^n$ be a compact and convex set. Let $R=\max\{|x|: x \in X\}$ and let $m\in \mathbb{R}$ be a constant such that
$$
m\leq \min\{f(x): x\in X\}.
$$
Than for any $D> D_n (f,R)$ and any $\xi\in\rr^n$, the function
$$
\varphi_\xi(x):=e^{|x-\xi|^2}[f(x)-m+D], \quad x\in \mathbb{R}^n
$$
is strongly convex on $X$.
\end{cor}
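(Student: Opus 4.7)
The plan is to apply Theorem \ref{uwypuklanie na zwartym} to the shifted polynomial $g(x) := f(x) - m + D$, treating the exponent ``1'' appearing in $\varphi_\xi$ as playing the role of the parameter $N = 1$ in the theorem. First I would observe that replacing $f$ by $g$ only changes the constant term; since the definition of $D_n(\cdot, R)$ involves only sums over $j \geq 1$, the quantity $D_n(g, R)$ equals $D_n(f, R)$, so the derivative bound for $g$ is still controlled by $D$.

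Next I would check positivity of $g$ on $X$. Since $m \leq \min_X f$, we have $g(x) = f(x) - m + D \geq D$ for all $x \in X$. Because the definition of $D_n$ builds in a lower bound of $1$, and because $D > D_n(f, R) \geq 1$, this yields a strictly positive lower bound for $g$ on $X$, as required by the hypothesis of Theorem \ref{uwypuklanie na zwartym}.

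To complete the verification, I would set $m_g := D$ (lower bound for $g$ on $X$) and $D_g := D_n(f, R)$ (admissible derivative bound for $g$, since $D_n(g,R) = D_n(f,R)$). The core computation is then
\[
\mathcal{N}(m_g, D_g) = \frac{D_g}{2 m_g} + \frac{D_g^2}{2 m_g^2} = \frac{D_n(f,R)}{2D} + \frac{D_n(f,R)^2}{2 D^2} < \frac{1}{2} + \frac{1}{2} = 1,
\]
where the strict inequality uses precisely the hypothesis $D > D_n(f, R)$. Hence $1 > \mathcal{N}(m_g, D_g)$, and Theorem \ref{uwypuklanie na zwartym} applied to $g$ with $N = 1$ gives that $\varphi_\xi(x) = e^{|x-\xi|^2}\,g(x)$ is strongly convex on $X$ for every $\xi \in \mathbb{R}^n$.

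The only subtlety is confirming that the strict hypothesis $D > D_n(f,R)$ is exactly calibrated to push $\mathcal{N}(D, D_n(f,R))$ below the fixed value $N = 1$; this is the main point of the argument, but it reduces to the elementary algebraic identity above, so no further analytic work is needed.
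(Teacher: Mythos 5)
Your proof is correct and is precisely the intended deduction from Theorem \ref{uwypuklanie na zwartym} (the paper only states ``we obtain the following corollary'' without spelling it out). The key steps — noting that $D_n(\cdot,R)$ is insensitive to adding a constant so $D_n(g,R)=D_n(f,R)$, taking $m_g=D$ as the lower bound for $g=f-m+D$ on $X$, and checking that $D>D_n(f,R)$ forces $\mathcal{N}(D,D_n(f,R))<1$ so that $N=1$ is admissible — are exactly the calculations one needs, and they are all carried out correctly.
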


By a similar argument as in the proof of Theorem \ref{uwypuklanie na zwartym}, we obtain the following fact.

\begin{rem}\label{uwypuklanie na zwartymrem}
Let $f:\mathbb{R}^n\to \mathbb{R}$ be a function of class $C^2$ and let $X\subset \mathbb R^n$ be a compact and convex set. Assume that $m,D\in \mathbb{R}$ are numbers satisfying 
$$
m < \min\{f(x): x\in X\}
$$
and the first and second directional derivatives of $f$ in directions of vectors of length $1$, are bounded by $D$ on $X$. Then the function
$$
\varphi_\xi(x)=e^{|x-\xi|^2}[f(x)-m+D], \quad x\in \mathbb{R}^n, \quad \xi\in \mathbb{R}^n
$$
is strongly convex on $X$. 
\end{rem}

\subsection{Convexifying polynomials with integer coefficients}\label{SectINTEGER}

For actual applications  of  Theorem \ref{uwypuklanie na zwartym} it is important to compute  the number $\mathcal{N}(m,D)$ 
for a given  convex semialgebraic set $X $
and a polynomial $f$ which is positive on $X$. Hence 
the main  difficulty is to compute (or rather estimate)  $m=\min\{f(x):x\in X\}$ and $R=\max\{|x|:x\in X\}$. This actually possible if we suppose that 
$f$ has integer coefficients and $X$ is described by  equations and inequalities  with integer coefficients.

%
 More precisely, let $X\subset \rr^n$, $n\ge 2$, be a compact semialgebraic set of the form 
\begin{multline}\label{formXc}
X=\{x\in\mathbb{R}^n:g_1(x)=0,\ldots,g_l(x)=0,g_{l+1}(x)\geq 0,
\ldots,\\
g_k(x)\geq 0\},
\end{multline}
where $g_1,\ldots,g_k\in \mathbb{Z}[x]$. 
Under the above notations 
G. Jeronimo, D.~Perrucci, E.~Tsigaridas in \cite{JPT} proved that

\begin{tw}\label{uwypuklaniecalkowitychna zwartym}
Let $f, g_1,\ldots,g_k\in \mathbb{Z}[x]$ be polynomials with degrees bound by an even integer $d$ and coefficients of absolute values at most $H$, and let $\tilde H=\max\{H,2n+2k\}$. If $f(x)>0$ for $x\in X$ and $X$ of the form \eqref{formXc} is compact, then 
$$ 
\min\{f(x):x\in X\}\geq \left(2^{4-\frac{n}{2}}\tilde H d^n\right)^{-n2^nd^n}.
$$
\end{tw}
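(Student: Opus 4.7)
The plan is to treat this statement as an instance of the quantitative Positivstellensatz / effective root separation machinery, whose proof consists in reducing the constrained minimum to the value of a polynomial system at a critical point and then bounding such an algebraic number from below in terms of degree and height.

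First, I would use compactness to extract a minimizer $x^{*}\in X$ with $f(x^{*})=m$, and then apply the Fritz John / Karush--Kuhn--Tucker necessary conditions: there is an index set $S\subset\{1,\dots,k\}$ containing $\{1,\dots,l\}$ such that $g_i(x^{*})=0$ for $i\in S$ and there exist multipliers $(\lambda_0,\lambda_i)_{i\in S}$, not all zero, with $\lambda_0\nabla f(x^{*})=\sum_{i\in S}\lambda_i\nabla g_i(x^{*})$ and $\lambda_i\ge 0$ for the inequality indices. Together with the equation $f(x^{*})-m=0$ this gives a polynomial system in the variables $(x,\lambda,m)$ whose integer coefficients have absolute values at most $H$ (or a controlled multiple thereof) and whose total degrees are bounded by $d$. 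The number $m$ is then one of finitely many algebraic numbers cut out, after homogenization of the multipliers and projection, by this system.

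Second, I would eliminate the auxiliary variables $x$ and $\lambda$ by a resultant/Gr\"obner or $u$-resultant computation; the surviving univariate polynomial $P(y)\in\mathbb{Z}[y]$ with $P(m)=0$ admits a B\'ezout-type degree bound of order $d^{n}$ and a coefficient-height bound that is polynomial in $\tilde H$, $d$ and exponential in $n$ (this is the point where the precise constants $2^{4-n/2}$, $\tilde H$, and the exponent $n2^{n}d^{n}$ in the statement arise, as tracked carefully in \cite{JPT}). Now apply a Cauchy--Mignotte-type lower bound: any nonzero root $y_0$ of a polynomial in $\mathbb{Z}[y]$ of degree $\delta$ and height $\Lambda$ satisfies $|y_0|\ge (1+\Lambda)^{-1}$, so in particular $|y_0|\ge \Lambda^{-\delta}$ up to a harmless factor; since $f$ is positive on $X$ we have $m>0$, hence $m$ is such a nonzero root and the bound of the theorem follows by substituting the explicit estimates on $\delta$ and $\Lambda$.

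The main obstacle is the effective bookkeeping in the elimination step: one must enumerate all admissible active sets $S$ (of which there are at most $2^{k}$), keep a uniform control of degrees and heights through the resultant, and moreover handle the fact that the KKT system is not automatically zero-dimensional. The technical heart of \cite{JPT} consists in deforming $f$ and the $g_i$ to a generic system so that the critical locus becomes finite, proving that this perturbation does not raise $m$ below the announced bound, and then invoking a sharp arithmetic B\'ezout inequality to convert the B\'ezout degree $d^{n}$ and height $\tilde H$ into the explicit exponent $n2^{n}d^{n}$ and base $2^{4-n/2}\tilde H d^{n}$ appearing in the displayed estimate. Granting their quantitative elimination lemma, the argument concludes in one line by Cauchy's lower bound on nonzero integer-polynomial roots.
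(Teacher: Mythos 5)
The paper does not prove this statement at all: it is quoted as an external result, with the sentence immediately preceding it reading ``G.~Jeronimo, D.~Perrucci, E.~Tsigaridas in \cite{JPT} proved that\dots'', and no \texttt{proof} environment follows. It is used throughout the paper as a black box (notably in Theorem~\ref{uwypuklanie na zwartym calk}). There is therefore no internal proof to compare your sketch against; the correct ``proof'' in the context of this paper is simply the citation to \cite{JPT}.

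As a summary of what \cite{JPT} actually does, your outline captures the general shape: pass to a critical-point (Lagrange/KKT) system at a minimizer, deform to force the critical locus to be zero-dimensional, eliminate down to a univariate polynomial $P\in\mathbb{Z}[y]$ vanishing at $m$, control $\deg P$ and the height of $P$ via an arithmetic B\'ezout-type estimate, and finish with a Cauchy-type lower bound on the modulus of a nonzero root. You are explicit that the substantive work --- the deformation argument, the enumeration of active constraints, and the bookkeeping that produces exactly the base $2^{4-n/2}\tilde H d^n$ and the exponent $n2^n d^n$ --- is deferred to \cite{JPT}, so what you have written is a faithful outline rather than a self-contained proof. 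Two small remarks: the gradients $\nabla g_i$ have integer coefficients bounded by $dH$ rather than $H$, which matters when one is tracking constants this precisely; and the intermediate step ``so in particular $|y_0|\geq \Lambda^{-\delta}$'' is an unnecessary (and slightly confusing) weakening of the Cauchy bound $|y_0|\geq (1+\Lambda)^{-1}$ --- the degree $d^n$ in the final estimate enters through the height $\Lambda$ of the eliminant, not as an extra multiplicative factor in the exponent of the root bound.
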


For a positive real number $H$ and positive integers $d,n,k$ we put
$$
\frak{b}(n,d,H,k)=\left(2^{4-\frac{n}{2}}\max\{H,2n+2k\}d^n\right)^{-n2^nd^n}
$$

From Theorems \ref{uwypuklaniecalkowitychna zwartym} and \ref{uwypuklanie na zwartym} we immediately obtain

\begin{tw}\label{uwypuklanie na zwartym calk}
Let $X\subset \rr^n$ be a compact and convex semialgebraic set of the form \eqref{formXc} and let $f, g_1,\ldots,g_k\in \mathbb{Z}[x]$ be polynomials with degrees bound by an even integer $d $ and coefficients of absolute values at most $H$. 
 Set
$$
R=\sqrt{\big[\frak{b}(n+1,\max\{d,4\},H,k+2)\big]^{-1}-1},\quad m=\frak{b}(n.d,H,k).
$$
Then  
\begin{equation}\label{boundR}
\max\{|x|:x\in X\}\leq R.
\end{equation}
Moreover, if $f(x)>0$ for $x\in X$, then for any $D \geq D_n \left(f,R\right)$,  $N> \mathcal{N}\left(m,D\right)$  and for any $\xi\in\rr^n$ 
the function $$\varphi _{N,\xi}(x):=e^{N|x-\xi|^{2}}f(x)$$ is strongly convex on $X$.
\end{tw}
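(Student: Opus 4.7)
The plan is to reduce both assertions of the theorem to direct applications of Theorem \ref{uwypuklaniecalkowitychna zwartym}, and then feed the resulting quantitative bounds into Theorem \ref{uwypuklanie na zwartym}. Bounding $m$ from below is immediate: since $f>0$ on the compact set $X$ and $f,g_1,\dots,g_k\in \mathbb{Z}[x]$ satisfy the hypotheses of Theorem \ref{uwypuklaniecalkowitychna zwartym} in dimension $n$ with degree bound $d$, coefficient height bound $H$ and $k$ defining polynomials, we obtain $\min_{X}f\geq \mathfrak{b}(n,d,H,k)=m$.

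Bounding $R$ is the subtler step. Estimating $\max\{|x|:x\in X\}$ from above is equivalent to estimating $\min_{x\in X}\tfrac{1}{1+|x|^2}$ from below, and the latter is in turn the minimum of the coordinate function $y$ on the auxiliary compact semialgebraic set
\[
X':=\{(x,y)\in\rr^{n+1}:x\in X,\ (1+|x|^2)y-1\geq 0,\ 1-(1+|x|^2)y\geq 0\},
\]
which is precisely the graph $\{(x,1/(1+|x|^2)):x\in X\}\subset \rr^{n+1}$. I would then do the bookkeeping: $X'$ lives in $n+1$ variables and is defined by $k+2$ polynomials (the original $k$ together with the two new inequalities), all in $\mathbb{Z}[x,y]$. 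The two new polynomials have total degree $3$ and coefficients in $\{-1,0,1\}$, so the coefficient bound remains $\max\{H,1\}=H$. Since Theorem \ref{uwypuklaniecalkowitychna zwartym} requires an even degree bound, we round $\max\{d,3\}$ up to $\max\{d,4\}$, which is even because $d$ is. Applying Theorem \ref{uwypuklaniecalkowitychna zwartym} to the polynomial $y$ on $X'$ with these parameters gives
\[
\min_{(x,y)\in X'}y\geq \mathfrak{b}(n+1,\max\{d,4\},H,k+2),
\]
whence $1+|x|^2\leq \mathfrak{b}(n+1,\max\{d,4\},H,k+2)^{-1}$ for every $x\in X$, which is exactly \eqref{boundR}.

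Once $R$ and $m$ are pinned down by the two estimates above, the hypotheses of Theorem \ref{uwypuklanie na zwartym} are met, and the strong convexity of $\varphi_{N,\xi}(x)=e^{N|x-\xi|^2}f(x)$ on $X$ for any $\xi\in\rr^n$, $D\geq D_n(f,R)$ and $N>\mathcal{N}(m,D)$ follows at once. The main obstacle I expect is the constraint-counting and parameter-tracking in the construction of $X'$: one must check that encoding $y=1/(1+|x|^2)$ by a pair of polynomial inequalities preserves the set, that the new system still fits the integer-coefficient framework of Theorem \ref{uwypuklaniecalkowitychna zwartym}, and that the resulting parameter tuple is exactly $(n+1,\max\{d,4\},H,k+2)$ used in the definition of $R$. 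Everything else is just plugging the two lower bounds into the already established Theorem \ref{uwypuklanie na zwartym}.
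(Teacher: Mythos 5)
Your proof is correct and follows essentially the same route as the paper: bound $m$ directly via Theorem \ref{uwypuklaniecalkowitychna zwartym}, lift to $\rr^{n+1}$ by an auxiliary semialgebraic set encoding $y=1/(1+|x|^2)$, apply the same theorem there to bound $1/(1+|x|^2)$ from below (hence $|x|$ from above), and then invoke Theorem \ref{uwypuklanie na zwartym}. The only (cosmetic) difference is the encoding: the paper defines $Y=\{(x,y):x\in X,\ (1+|x|^2)y^2-1=0,\ y\geq 0\}$ and minimizes $h=y^2$, while you use the pair of inequalities $(1+|x|^2)y-1\geq 0$, $1-(1+|x|^2)y\geq 0$ and minimize $y$; both give $k+2$ integer-coefficient constraints of degree at most $\max\{d,4\}$ in $n+1$ variables with the same coefficient height, hence exactly the same value of $\frak{b}(n+1,\max\{d,4\},H,k+2)$.
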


\begin{proof}
By Theorem \ref{uwypuklaniecalkowitychna zwartym} we have $0<m\leq \min\{f(x):x\in X\}$. 
Let
$$
Y=\{(x,y)\in\rr^n\times\rr:x\in X,\,(1+|x|^2)y^2-1=0,\,y\ge 0\},
$$
and let $h(x,y)=y^2$. Then $Y\subset \rr^{n+1}$ is a compact semialgebraic set defined by $k+2$ polynomial equations and inequalities of degrees bounded by $\max\{d,4\}$. Moreover, the absolute values of coefficients of those polynomials and $h$ are bounded by $H$. Then, by Theorem \ref{uwypuklanie na zwartym calk},  
$$
\min\{h(x,y):(x,y)\in Y\}\geq \frak{b}(n+1,\max\{d,4\},H,k+2)
$$
and consequently we obtain \eqref{boundR}. Summing up, Theorem \ref{uwypuklanie na zwartym} gives the assertion. 
\end{proof}

\section{Convexifying polynomials on non-compact sets}\label{Sect44}

In this section we will show that the function $\varphi _N(x)=e^{N|x|^2}f(x)$ in $n$ variables is strongly convex on a closed and convex set $X\subset\mathbb{R}^n$ (not necessary compact), provided the polynomial $f$ takes values larger than a certain number $m>0$, the leading form of a polynomial $f$ has only positive values and $N$ is suficiently large.

\subsection{
 Convexifying polynomials in one variable}

For a polynomial $f\in\rr[t]$ of the form $f(t)=a_0t^d+a_1t^{d-1}+\cdots+a_d$, $a_0,\ldots,a_d\in\rr$, $a_0\ne 0$, we put
$$
K(f):=2\max_{1\leq i\leq d}\left|\frac{a_i}{a_0}\right|^{1/i}.
$$

\begin{lemat}\label{K(g)<K(f)var}
Let $f\in \mathbb{R}[t]$ be a polynomial of degree $d>0$ which is positive on a closed interval $I\subset \rr$ (not necessary compact). Let $m\in\rr$ be a positive number such that
$$
\inf\{f(t):t\in I\}\geq m.
$$
Let $g_N\in\rr[t]$ be a polynomial of the form 
$$
g_N=2Nf^2-(f')^2+ff'',
$$
and let $\Theta_N\in\mathbb{R}[t,\xi]$ be a polynomial of the form
\begin{equation}\label{thetaNformvar}
\Theta _N(t,\xi):=4N^2(t-\xi)^2f(t)+4N(t-\xi)f'(t)+2Nf(t)+f''(t)
\end{equation}
for $N\in \mathbb{R}$ and $N \geq 1$. Then 
for $N\geq \mathcal{N}(m,D)$, where $D\geq D_1(f,R)$ and $R\geq \max\{K(f),K(g_1)\}$, we  have
$$
\Theta_N(t,\xi)>0\quad\hbox{for }(t,\xi)\in I\times \rr.
$$
\end{lemat}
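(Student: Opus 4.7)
The plan is to reduce positivity of $\Theta_N$ on $I\times\rr$ to a univariate positivity statement for $g_N$ on $I$, and then verify the latter by splitting $I$ into a compact core and its complement. Viewing $\Theta_N(t,\xi)$ as a quadratic polynomial in the shifted variable $s=\xi-t$ for fixed $t\in I$, we have
$$
\Theta_N(t,\xi)=4N^2 f(t)\,s^2-4N f'(t)\,s+\bigl(2N f(t)+f''(t)\bigr).
$$
Since $f>0$ on $I$, the leading coefficient is positive, so positivity for all $\xi\in\rr$ is equivalent to negativity of the discriminant. A short computation shows that, up to the positive factor $16N^2$, this discriminant condition is exactly $2Nf(t)^2-(f'(t))^2+f(t)f''(t)>0$, i.e.\ $g_N(t)>0$. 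The problem therefore reduces to proving $g_N(t)>0$ for every $t\in I$.

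For $t\in I\cap[-R,R]$ I would apply the bounds $|f'(t)|,|f''(t)|\leq D$ coming from $D\geq D_1(f,R)$, together with $f(t)\geq m$, to obtain the chain
$$
g_N(t)\geq 2Nf(t)^2-D^2-Df(t)\geq f(t)(2Nm-D)-D^2\geq m(2Nm-D)-D^2=2Nm^2-mD-D^2,
$$
where the middle inequalities use $f\geq m$ and $2Nm-D>0$, both granted by $N\geq\mathcal{N}(m,D)$. The choice $\mathcal{N}(m,D)=D/(2m)+D^2/(2m^2)$ is precisely what makes the final expression nonnegative, and strictness comes from the fact that at any $t$ one of the inequalities in the chain is strict. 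This step essentially recycles the calculation of Lemma~\ref{lemat dla funkcji2}.

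For $t\in I$ with $|t|>R$, the argument is different and exploits the Cauchy-type root bound $K(g_1)$. Inspection of leading terms shows that $g_1=2f^2-(f')^2+ff''$ has even degree $2d$ with positive leading coefficient $2a_0^2$, so the hypothesis $R\geq K(g_1)$ together with the standard Cauchy root bound implies $g_1(t)>0$ for every $|t|>R$. Combined with the identity $g_N-g_1=2(N-1)f^2$ and $N\geq 1$, this yields $g_N(t)\geq g_1(t)>0$ on the unbounded piece. The main difficulty, compared with the compact-case Lemma~\ref{lemat dla funkcji2}, lies in controlling behavior at infinity uniformly in $\xi\in\rr$: the quadratic-in-$\xi$ reduction and the $N$-independent bound $K(g_1)$ are precisely what allow us to bypass a direct derivative estimate, which would necessarily fail as $t$ or $|\xi|$ tend to infinity.
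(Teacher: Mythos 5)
Your proof is correct and follows essentially the same strategy as the paper's: view $\Theta_N(t,\xi)$ as a quadratic in the shifted variable $s=\xi-t$ (the paper writes it as a quadratic in a generic variable), reduce positivity over all $\xi\in\rr$ to negativity of the discriminant $-16N^2 g_N(t)$, and establish $g_N>0$ on $I$ by treating $|t|\leq R$ via the derivative bounds $D\geq D_1(f,R)$ together with $f\geq m$, and $|t|>R$ via the Cauchy-type root bound $R\geq K(g_1)$ combined with $g_N-g_1=2(N-1)f^2\geq 0$. One small caveat: at $N=\mathcal{N}(m,D)$ exactly, your final lower bound $2Nm^2-mD-D^2=2m^2\bigl(N-\mathcal{N}(m,D)\bigr)$ equals zero, and your remark that ``one of the inequalities in the chain is strict'' is not guaranteed in general (consider $f\equiv m$ constant at a boundary case); the paper's own proof in fact takes $N>\mathcal{N}(m,D)$ strictly, slightly at odds with the $\geq$ in the lemma's hypothesis, and your argument inherits the same ambiguity rather than introducing a new one.
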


\begin{proof}
Consider the following quadratic function in $x$
$$
4N^2x^2f(t)+4Nxf'(t)+2Nf(t)+f''(t).
$$
Then its discrirminant is of the form $\Delta(t)=-16N^2g_N(t)$. 
Take $R\geq \max\{K(f),K(g_1)\}$, $D\geq D_1(f,R)$ and $N>\mathcal{N}(m,D)$. Then we have 
$$
g_N(t)\geq 2Nf^2(t)-D^2-f(t)D\geq f^2(t)\left(2N-\tfrac{D^2}{m^2}-\tfrac{D}{m}\right)>0
$$
for $t\in I$, $|t|\leq R$. On the other hand $g_N(t)\geq g_1(t)>0$ for $t\in I$, $|t|\geq R$. So $\Delta(t)<0$ for $t\in I$ and we deduce the assertion. 
\end{proof}

\begin{tw}\label{thm3unboundedvar}
Let $f \in \mathbb R[t] $ be a polynomial of degree $d>0$ and let $I\subset \mathbb{R}$ be a closed interval (not necessary compact). Assume that there exists $m\in \mathbb R$ such that 
$$
0<m\leq \inf\{f(t): t\in I\}.
$$ 
Let $R>\max\{K(f),K(2f^2-(f')^2+ff'')$ and $D\geq D_1(f,R)$. 
Then for any $N\in \mathbb{R}$, $N\geq \mathcal{N}(m,D)$, and any $\xi\in\rr$ the function 
$$
\varphi_{N,\xi}(t)=e^{N(t-\xi)^2}f(t)
$$ 
is strongly convex on $I$.
\end{tw}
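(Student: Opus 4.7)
The plan is to exploit the explicit second-derivative formula for $\varphi_{N,\xi}$. A direct computation, or equivalently specializing the identity from the proof of Lemma~\ref{lemat dla funkcji2} to $p=-2\xi$ and $q=\xi^2$, gives
\begin{equation*}
\varphi_{N,\xi}''(t)=e^{N(t-\xi)^2}\,\Theta_N(t,\xi),
\end{equation*}
with $\Theta_N$ defined in \eqref{thetaNformvar}. Since $e^{N(t-\xi)^2}\ge 1$, the desired strong convexity will follow once a strictly positive lower bound for $\Theta_N(\cdot,\xi)$ is produced on $I$.

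The first ingredient is Lemma~\ref{K(g)<K(f)var}: with $g_1=2f^2-(f')^2+ff''$, the hypotheses of the theorem are exactly those of that lemma, so $\Theta_N(t,\xi)>0$ for every $(t,\xi)\in I\times\mathbb R$. This yields pointwise strict positivity of $\varphi_{N,\xi}''$ on $I$, but not yet a uniform lower bound.

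To upgrade pointwise to uniform positivity I would decompose $I$ as $I_0=I\cap[-R,R]$ and $I_\infty=I\setminus[-R,R]$. On the compact piece $I_0$ the bounds $|f'|,|f''|\le D$ hold by the choice $D\ge D_1(f,R)$, so Lemma~\ref{lemat dla funkcji2} applied with $p=-2\xi$, $q=\xi^2$ (which satisfy $p^2=4q$) delivers the explicit $\xi$-independent estimate
\begin{equation*}
\varphi_{N,\xi}''(t)\ge -\tfrac{D^2}{m}-D+2Nm>0,\qquad t\in I_0.
\end{equation*}
On $I_\infty$, for fixed $\xi$ the function $t\mapsto\Theta_N(t,\xi)$ is a polynomial in $t$ of degree $d+2$ with leading coefficient $4N^2a_0$, where $a_0$ is the leading coefficient of $f$; since $f$ is positive on the closed interval $I$, necessarily $f(t)\to+\infty$ along any unbounded end of $I$, hence the same holds for $\Theta_N(\cdot,\xi)$. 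Combined with the pointwise positivity already established, this coercivity produces a strictly positive infimum of $\Theta_N(\cdot,\xi)$ on the closed set $I_\infty$, and therefore a uniform lower bound for $\varphi_{N,\xi}''$ there.

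The main obstacle is precisely this last passage: turning the pointwise positivity supplied by Lemma~\ref{K(g)<K(f)var} into a \emph{uniform} bound on the unbounded part of $I$. The decisive structural input is the coercivity of $\Theta_N(\cdot,\xi)$, which rests on the positivity of $f$ forcing $f\to+\infty$ at any unbounded end of $I$; the choice $R\ge K(f)$ is what pins down the sign of the relevant leading coefficient. Once coercivity is available the two lower bounds from $I_0$ and $I_\infty$ combine to give a single positive constant below which $\varphi_{N,\xi}''$ never drops, completing the proof.
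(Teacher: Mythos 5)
Your proof is correct and follows essentially the same route as the paper: write $\varphi_{N,\xi}''(t)=e^{N(t-\xi)^2}\Theta_N(t,\xi)$ and invoke Lemma~\ref{K(g)<K(f)var}. The paper's own proof of Theorem~\ref{thm3unboundedvar} consists of exactly that one sentence, so the extra observation you make --- that the lemma's conclusion $\Theta_N>0$ on $I\times\rr$ is only pointwise, whereas strong convexity requires a uniform positive lower bound on $\varphi_{N,\xi}''$ --- is a legitimate refinement that the paper leaves implicit. Your mechanism for supplying the missing uniformity is sound: on $I\cap[-R,R]$, Lemma~\ref{lemat dla funkcji2} gives the explicit $\xi$-independent constant $2Nm-\tfrac{D^2}{m}-D$, while on the unbounded part the coercivity of $t\mapsto\Theta_N(t,\xi)$ together with the already-established strict positivity yields a positive (possibly $\xi$-dependent) infimum, which is all that strong convexity of $\varphi_{N,\xi}$ for a fixed $\xi$ asks for. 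Two small corrections: $I\setminus[-R,R]$ is relatively open in $I$, not closed, so work instead with $I\cap\{\,|t|\ge R\,\}$; and the fact that $f\to+\infty$ along any unbounded end of $I$ follows directly from $f\ge m>0$ on $I$ and $\deg f>0$, independently of the hypothesis $R\ge K(f)$ --- that bound is used inside Lemma~\ref{K(g)<K(f)var} to ensure $g_1>0$ outside $[-R,R]$, not to determine the sign of the leading coefficient of $\Theta_N(\cdot,\xi)$.
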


\begin{proof}
It suffices to observe that $\varphi''_{N,\xi}(t)=e^{N|t-\xi|^2}\Theta_N(t,\xi)$ and apply Lemma \ref{K(g)<K(f)var}.
\end{proof}

\subsection{
Convexifying polynomials in several variables}

\begin{tw}\label{convexonconvexsetnewvar}
Let $X\subset \rr^n$ be a convex closed set. Assume that 
$f$ is a polynomial of degree $d>0$ which is positive on~$X$,
\begin{equation}\label{assumptioncompactvar}
f_d^{-1}(0) = \{0\}
\end{equation}
and there exists $m>0$ such that
\begin{equation}\label{estfcompact1var}
\inf \{f(x):x\in X\} \ge m. 
\end{equation}
Then there exists $N_0\in\nn$ such that for any integer $N\geq N_0$ and any $\xi\in\rr^n$ 
 the function $\vf_{N,\xi}(x)=e^{N|x-\xi|^2}f(x)$ is strongly convex on $X$. 
\end{tw}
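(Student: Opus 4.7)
The plan is to analyze the Hessian of $\varphi_{N,\xi}$ directly and to show that it admits a uniform positive lower bound independent of $\xi$. A short computation of the second directional derivative at $x\in X$ in a unit direction $v\in \rr^n$ yields
\[
\langle H_{\varphi_{N,\xi}}(x)v,v\rangle
=e^{N|x-\xi|^2}\bigl(4N^2\langle v,x-\xi\rangle^2 f(x)+4N\langle v,x-\xi\rangle\langle v,\nabla f(x)\rangle+2Nf(x)+\langle v,H_f(x)v\rangle\bigr),
\]
which is the several-variable analogue of the expression $\Theta_N$ from Lemma \ref{K(g)<K(f)var}. The bracket is a quadratic in $s=\langle v,x-\xi\rangle$ with positive leading coefficient $4N^2f(x)$. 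As $\xi$ varies over $\rr^n$, the scalar $s$ takes every real value, so the bracket must be positive for all $s\in\rr$; its minimum over $s$ equals $G_N(x,v)/f(x)$, where
\[
G_N(x,v):=2Nf(x)^2+f(x)\langle v,H_f(x)v\rangle-\langle v,\nabla f(x)\rangle^2.
\]
Thus the theorem reduces to producing $N_0\in\nn$ and $\alpha>0$ such that $G_N(x,v)\geq \alpha f(x)$ for every $N\geq N_0$, $x\in X$ and $|v|=1$.

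Such a uniform bound is obtained by splitting $X$ into a compact part and its complement. By \eqref{assumptioncompactvar} and continuity of $f_d$ on the unit sphere, $\mu:=\min_{|v|=1}f_d(v)>0$, and since $\deg(f-f_d)<d$ this yields $f(x)\geq(\mu/2)|x|^d$ for $|x|$ larger than some $R_0$; simultaneously $|\langle v,\nabla f(x)\rangle|\leq C_1|x|^{d-1}$ and $|\langle v,H_f(x)v\rangle|\leq C_2|x|^{d-2}$ on the same range. On $X\cap\{|x|\geq R_0\}$ this gives, for every $N\geq 1$,
\[
G_N(x,v)\geq\tfrac{N\mu^2}{2}|x|^{2d}-C|x|^{2d-2}\geq c_1|x|^{2d}
\]
after possibly enlarging $R_0$, hence $G_N(x,v)/f(x)\geq c_2|x|^{d}\geq c_2R_0^d$. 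On the compact part $X\cap\overline B(0,R_0)$, the continuous function $(\langle v,\nabla f\rangle^2-f\langle v,H_fv\rangle)/f^2$ is bounded above by some $M$ (using $f\geq m>0$), and rewriting
\[
G_N(x,v)/f(x)=f(x)\Bigl(2N-\frac{\langle v,\nabla f\rangle^2-f\langle v,H_fv\rangle}{f^2}\Bigr)\geq m\,(2N-M)
\]
shows that any integer $N_0$ with $2N_0\geq M+1$ gives $G_N(x,v)/f(x)\geq m$ on the compact part for all $N\geq N_0$. Combining the two estimates, $G_N(x,v)/f(x)\geq \alpha$ holds uniformly with $\alpha=\min(m,c_2R_0^d)>0$, and since $e^{N|x-\xi|^2}\geq 1$ this yields $\langle H_{\varphi_{N,\xi}}(x)v,v\rangle\geq \alpha$, so $\varphi_{N,\xi}$ is strongly convex on $X$ with a constant independent of $\xi$.

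The main difficulty is the asymptotic behaviour on the unbounded part, and the hypothesis $f_d^{-1}(0)=\{0\}$ is essential precisely there: it forces $f(x)^2$ to grow like $|x|^{2d}$, two orders above the perturbations $\langle v,\nabla f\rangle^2$ and $f\langle v,H_fv\rangle$, so the dominant term $2Nf(x)^2$ of $G_N$ absorbs the others at infinity for every positive $N$. Without this positivity, $\langle v,\nabla f\rangle^2/f^2$ need not stay bounded at infinity and no finite $N_0$ could convexify $\varphi_{N,\xi}$ uniformly in $\xi$; this is precisely the phenomenon that motivates replacing the single exponential by a double one later in the paper in the non-compact case without the leading-form assumption.
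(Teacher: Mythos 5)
Your proof takes essentially the same route as the paper: reduce to positivity of the quantity $G_N(x,v)=2Nf^2+f\,\partial_v^2 f-(\partial_v f)^2$ (which is exactly $-\Delta/(16N^2)$, the discriminant of the one-variable quadratic that appears when one differentiates $\varphi_{N,\xi}$ along a line in the paper), and then split $X$ into a ball and its complement, using $f_d^{-1}(0)=\{0\}$ to get the degree-$d$ growth of $f$ at infinity. The paper restricts to lines $\gamma_{\alpha,\beta}$ and uses explicit quantities $\mathbb{K}(f)$, $m(f)$, $D_n(f,R)$ to make $N_0$ effectively computable, while you compute the Hessian directly and work with soft constants $C_1,C_2,\mu$; the underlying estimates are the same.

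One step is asserted too quickly: from \eqref{assumptioncompactvar} alone you cannot conclude $\mu:=\min_{|v|=1}f_d(v)>0$. The hypothesis only says $f_d$ does not vanish on the unit sphere; it is also consistent with $f_d<0$ off the origin (and, when $n=1$ and $d$ is odd, with $f_d$ changing sign). In those cases your asymptotic lower bound $f(x)\geq(\mu/2)|x|^d$ collapses. The paper addresses this by a dichotomy: if $f_d<0$ on $\rr^n\setminus\{0\}$ then $f$ cannot stay positive on any unbounded set, so $X$ is compact and only the bounded-part estimate is needed; if $f_d>0$ on $\rr^n\setminus\{0\}$, the growth argument applies. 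To justify $\mu>0$ when $X$ is unbounded, one can argue that any direction $u$ obtained as a limit of $x_k/|x_k|$ with $x_k\in X$, $|x_k|\to\infty$, satisfies $f_d(u)\geq 0$, hence $f_d(u)>0$ by \eqref{assumptioncompactvar}; for $n\geq 2$ connectedness of the sphere then gives $f_d>0$ everywhere on it. Adding this case distinction (or observing that $\mu\leq 0$ forces $X$ to be compact) closes the gap; the rest of the argument is sound and parallels the paper's proof.
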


\begin{proof}
Take any line of the form $\gamma_{\alpha,\beta}(t)=\beta t +\alpha$, where $\alpha,\beta\in\rr^n$, $|\beta|=1$ and $\langle \alpha,\beta\rangle=0$. Then
$$
(\varphi_{N,\xi}\circ \gamma_{\alpha,\beta})(t)=e^{N(t^2+|\alpha|^2-2\langle \beta,\xi\rangle t-2\langle\alpha,\xi\rangle)}f(\gamma_{\alpha,\beta}(t)).
$$
Then
\begin{equation*}\label{eqbis}
\begin{split}
(\varphi_{N,\xi}\circ \gamma_{\alpha,\beta})''(t)=&e^{N(t^2+|\alpha|^2-2\langle \beta,\xi\rangle t-2\langle\alpha,\xi\rangle)}[4N^2(f\circ\gamma_{\alpha,\beta})(t)y^2\\
&+4N(f\circ\gamma_{\alpha,\beta})'(t)y+ 2N(f\circ\gamma_{\alpha,\beta})(t)\\
&+(f\circ\gamma_{\alpha,\beta})''(t)],
\end{split}
\end{equation*}
where $y=t+\langle\beta,\xi\rangle$. Consider the function in the square bracket as a quadratic function in $y$. Then its discriminant is of the form 
$$
\Delta=-16N^2[2N(f\circ\gamma_{\alpha,\beta})^2(t)+(f\circ\gamma_{\alpha,\beta})(t)(f\circ\gamma_{\alpha,\beta})''(t)-((f\circ\gamma_{\alpha,\beta})'(t))^2].
$$
Note that $(f\circ\gamma_{\alpha,\beta})'(t)$ and $(f\circ\gamma_{\alpha,\beta})''(t)$ are the first and the second directional derivatives of $f$ at $\gamma_{\alpha,\beta}(t)$ in the direction $\beta$ and $|\beta|=1$. 

Observe that there exists $N_0$ such that for any $N\geq N_0$ we have $\Delta<0$. Indead, it suffices to prove that for any $x\in X$ and any $\beta \in\rr^n$, $|\beta|=1$ we have
\begin{equation}\label{eqvar1}
2Nf(x)^2+f(x)\partial^2_\beta f(x)-(\partial_\beta f(x))^2>0.
\end{equation}
If $f_d(x)<0$ for $x\in\rr^n\setminus \{0\}$ then the set $X$ is compact and the inequality follows from the assumption that $f(x)\geq m$ for $x\in X$. Indead, let $D\geq \max \{|\partial _\beta f(x)|,|\partial^2_\beta f(x)|\}$ for $x\in X$, $|\beta|=1$. Since $X$ is compact, then
$$
2Nf(x)^2+f(x)\partial^2_\beta f(x)-(\partial_\beta f(x))^2\ge 2Nm^2-mD-D^2>0
$$
for $N>\mathcal{N}(m,D)$. This gives \eqref{eqvar1}.

Consider the case when $f_d(x)>0$ for $x\in\rr^n\setminus\{0\}$, and let 
$$
f_{d*}=\inf \{f_d(x):x\in S_n\},
$$
where $S_n$ is the unit sphere in $\mathbb{R}^n$, i.e., $S_n=\{x\in\mathbb{R}^n:|x|=1\}$.

Let $f$ be a polynomial of the form \eqref{funkcja f}. We set 
$$
\|f\|:=
\sum_{|\nu|\leq d}|a_\nu|.
$$
Then $\|f\|\geq \|f_d\|\geq f_{d*}$. If $f_{d*}>0$ then  we set
$$
\mathbb{K}(f):= \frac{2\|f\|}{f_{d*}}
$$
and 
$$
m(f):=f_{d*}-\sum_{j=0}^{d-1}\mathbb{K}(f)^{j-d}\sum_{|\nu|=j}|a_\nu|.
$$

In the further part of the proof we will need the following lemma.

\begin{lemat}\label{lemmamestvar}
If $d=\deg f>0$ and $f_{d*}>0$, then $m(f)>0$ and $f(x)\geq m(f)|x|^d$ for any $x\in \mathbb{R}^n$ such that $|x|\geq \mathbb{K}(f)$.
\end{lemat}

\begin{proof} 
Put
$$
h(t):=f_{d*}t^d-\sum_{j=0}^{d-1}\left(\sum_{|\nu|=j}|a_\nu|\right)t^{j}.
$$
Since $\frac{\|f\|}{f_{d*}}\geq 1$, then 
$$
K(h)=2\max_{1\leq i\leq d}\left|\frac{\sum_{|\nu|=d-i}|a_\nu|}{f_{d*}}\right|^{{1}/{i}}<  2\max_{1\leq i\leq d}\left|\frac{\|f\|}{f_{d*}}\right|^{{1}/{i}}=\mathbb{K}(f),
$$
and since $h'(t)>0$ for $t >K(h)$, then $h(|x|)\geq h(\mathbb{K}(f))>0$ for $|x|\geq \mathbb{K}(f)$. Moreover, $m(f)\mathbb{K}(f)^d=h(\mathbb{K}(f))$, so $m(f)>0$. On the other hand
$$
m(f)|x|^d\leq \left(f_{d*}-\sum_{j=0}^{d-1}|x|^{j-d}\sum_{|\nu|=j}|a_\nu|\right)|x|^d=h(|x|)\leq f(x)
$$
for $|x|\geq \mathbb{K}(f)$. This gives the assertion of Lemma \ref{lemmamestvar}.
\end{proof}

Take $R\geq \mathbb{K}(f)$, and $D\geq D_n(f,R)$ then for $N\geq \mathcal{N}(m,D)$ we have
$$
2Nf(x)^2+f(x)\partial^2_\beta f(x)-(\partial_\beta f(x))^2\geq 2Nm^2-mD-D^2>0
$$
for $|x|\leq R$. 

For $|x|\geq R$, we have
\begin{multline*}
2Nf(x)^2+f(x)\partial^2_\beta f(x)-(\partial_\beta f(x))^2\\
\geq 2Nm^2(f)|x|^{2d}-m(f)|x|^dD_n(f,|x|)-D_n^2(f,|x|).
\end{multline*}
Since for $|x|\geq 1$,
$$
D_n(f,|x|)\leq D_n(f,1)|x|^{d-1},
$$
then for $|x|\geq R$ and $|\beta|=1$ we have
\begin{multline*}
2Nf(x)^2+f(x)\partial^2_\beta f(x)-(\partial_\beta f(x))^2\\
\geq 2Nm^2(f)|x|^{2d}-m(f)D_n(f,1)|x|^{2d-1}-D_n^2(f,1)|x|^{2d-2}\\
\geq |x|^{2d}[2Nm^2(f)-m(f)D_n(f,1)-D_n^2(f,1)]>0.
\end{multline*}
for $N>\mathcal{N}(m(f),D_n(f,1))$. This gives \eqref{eqvar1}. Moreover, there exists $\epsilon>0$ such that 
$$
2Nf(x)^2+f(x)\partial^2_\beta f(x)-(\partial_\beta f(x))^2>\epsilon.
$$
for any $x\in X$ and $|\beta|=1$.

From \eqref{eqvar1} and the above there follows that $\Delta<-16N^2\epsilon$ for any $\alpha,\beta$ and $t$ such that $\gamma_{\alpha,\beta}(t)\in X$. Since $f(x)\geq m$ for $x\in X$, then $(\varphi_{N,\xi}\circ \gamma_{\alpha,\beta})''(t)\geq \tfrac{\epsilon}{m}$ if $\gamma_{\alpha,\beta}(t)\in X$. This gives the assertion.
\end{proof}

By analogous argument as for Theorem \ref{convexonconvexsetnewvar} and under notations of the proof  we obtain the following corollary.

\begin{cor}\label{uwypuklanie na wypuklymcor2}
Let $f\in \mathbb R [x]$ be a polynomial of degree $d$  and let  $X\subset \mathbb R^n$ be a convex and closed set. 
Under assumptions of Theorem \ref{convexonconvexsetnewvar} and notations of the proof,
for any $R\geq \mathbb{K}(f)$, $D\geq D_n(f,R)$ and 
$N>\max\{\mathcal{N}(m,D),\mathcal{N}(m(f),D_n(f,1))\}$,  and any $\xi\in\rr^n$ 
 the function $\vf_{N,\xi}(x)=e^{N|x-\xi|^2}f(x)$ is strongly convex on $X$. In particular 
 the function
$$
\varphi_\xi(x):=e^{|x-\xi|^2}[f(x)-m+D]
$$
is strongly convex on $X$.
\end{cor}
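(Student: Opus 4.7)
The first claim is a restatement of the proof of Theorem \ref{convexonconvexsetnewvar} in which the thresholds have been kept explicit. My plan is to retrace that argument: strong convexity of $\vf_{N,\xi}$ along every affine line $\gamma_{\alpha,\beta}(t)=\beta t+\alpha$ intersecting $X$ is equivalent to the negativity of the discriminant of the quadratic (in $y=t+\langle\beta,\xi\rangle$) appearing in $(\vf_{N,\xi}\circ\gamma_{\alpha,\beta})''(t)$, which reduces to the pointwise inequality
\begin{equation*}
2Nf(x)^{2}+f(x)\,\partial^{2}_{\beta}f(x)-(\partial_{\beta}f(x))^{2}>0,\qquad x\in X,\ |\beta|=1.
\end{equation*}
I split $X$ into the two regions $\{|x|\leq R\}$ and $\{|x|\geq R\}$, where the second is nonempty only because $R\geq\mathbb{K}(f)$. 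On the first region $f\geq m$ and $|\partial^{k}_{\beta}f|\leq D_{n}(f,R)\leq D$ force the inequality whenever $N>\mathcal{N}(m,D)$; on the second, Lemma \ref{lemmamestvar} gives $f(x)\geq m(f)|x|^{d}$, and the homogeneity estimate $D_{n}(f,|x|)\leq D_{n}(f,1)|x|^{d-1}$ forces the inequality whenever $N>\mathcal{N}(m(f),D_{n}(f,1))$. Taking the maximum of the two thresholds yields the first claim.

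For the second claim I apply the first to the polynomial $\widetilde f(x):=f(x)-m+D$ with the new lower bound $\widetilde m:=D$. The polynomial $\widetilde f$ has the same degree, the same leading form, and identical partial derivatives of positive order as $f$, so hypothesis \eqref{assumptioncompactvar} is preserved, $D_{n}(\widetilde f,R)=D_{n}(f,R)$, $D_{n}(\widetilde f,1)=D_{n}(f,1)$, and $\inf_{X}\widetilde f\geq D=\widetilde m$. A direct calculation gives $\mathcal{N}(D,D)=1$, so the bounded-region threshold is met exactly by $N=1$; a marginal enlargement of $D$ above $D_{n}(f,R)$, which the hypothesis $D\geq D_{n}(f,R)$ permits, turns the borderline into strict inequality.

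The main obstacle is the second threshold $\mathcal{N}(m(\widetilde f),D_{n}(f,1))$: although $D_{n}(\cdot,1)$ is unchanged, the quantities $\mathbb{K}(\widetilde f)$ and $m(\widetilde f)$ involve the full norm $\|\widetilde f\|$, which picks up an additive correction from the shifted constant term. I would recompute $\mathbb{K}(\widetilde f)$ and $m(\widetilde f)$ directly from their definitions, using that $f_{d*}$ and the coefficients of positive degree are unchanged, and check monotonicity with respect to the constant term; for $D$ chosen sufficiently large, $m(\widetilde f)$ exceeds the level needed to keep $\mathcal{N}(m(\widetilde f),D_{n}(f,1))<1$. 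Once this bookkeeping is done, $N=1$ exceeds both thresholds, and the first claim applied to $\widetilde f$ delivers strong convexity of $\varphi_\xi(x)=e^{|x-\xi|^{2}}[f(x)-m+D]$ on $X$.
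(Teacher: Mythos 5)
Your treatment of the first claim is correct and matches the argument the paper leaves implicit: it is precisely the proof of Theorem \ref{convexonconvexsetnewvar} with the two thresholds, the one on $\{|x|\leq R\}$ coming from $D\geq D_n(f,R)$ and Lemma \ref{lemat dla funkcji2}, and the one on $\{|x|\geq R\}$ coming from Lemma \ref{lemmamestvar}, made explicit.

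For the second claim, however, your plan has a genuine gap. You propose to apply the first claim to $\widetilde f := f-m+D$ with $N=1$ and to beat the at-infinity threshold $\mathcal{N}(m(\widetilde f),D_n(\widetilde f,1))$ by ``choosing $D$ sufficiently large,'' asserting that $m(\widetilde f)$ then grows past the needed level. That cannot happen: by definition $m(g)=g_{d*}-\sum_{j<d}\mathbb{K}(g)^{j-d}\sum_{|\nu|=j}|a_\nu|$ is always at most $g_{d*}$, and $\widetilde f$ has the same leading form as $f$, so $m(\widetilde f)\leq f_{d*}$ for every $D$ (indeed $m(\widetilde f)\to f_{d*}$ as $D\to\infty$). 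Hence $\mathcal{N}(m(\widetilde f),D_n(f,1))\geq\mathcal{N}(f_{d*},D_n(f,1))$ for every $D$, and whenever $D_n(f,1)\geq f_{d*}$ (which happens, e.g., whenever the leading coefficients of $f$ are small and $d\geq 2$) this lower bound is $\geq 1$, so the requirement $1>\mathcal{N}(m(\widetilde f),D_n(f,1))$ is never met, no matter how large $D$ is. Compounding this, invoking the first claim for $\widetilde f$ also requires $R\geq\mathbb{K}(\widetilde f)=2\|\widetilde f\|/f_{d*}$, which grows linearly in $D$, whereas the corollary fixes $R$ first and then only constrains $D\geq D_n(f,R)$ from below; $D$ is therefore not free to chase $\mathbb{K}(\widetilde f)$. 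The ``bookkeeping'' you defer is not a monotonicity check but the actual obstruction: a correct derivation of the second claim would have to bypass Lemma \ref{lemmamestvar} applied to $\widetilde f$ and instead estimate $\widetilde f$ on $\{|x|\geq R\}$ directly via $\widetilde f(x)=f(x)+D-m\geq m(f)|x|^d+(D-m)$, reducing the discriminant condition to $\widetilde f(x)>D_n(f,|x|)$ and then exploiting the relation between $R$, $m(f)$ and $D_n(f,1)$ that the first claim's hypothesis on $N$ implicitly carries.
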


The assumption \eqref{assumptioncompactvar} 
  that $f_d(x)\ne 0$ for $x\ne 0$, in Theorem \ref{convexonconvexsetnewvar},  
  can not be omited as the following example shows.

\begin{exa}\label{exacounter}
Let $f\in \rr[x,y,z]$ be a polynomial of the form
$$
f(x,y,z)=(y^2+z^2+1)\left[(x-1)^2(x+1)^2 +(yz+1)^2+y^2\right].
$$
Since $(y^2+z^2+1)\left[(yz+1)^2+y^2\right]\ge \frac{1}{2}$ for $(y,z)\in\rr^2$ then we easily see that 
$$
f(x,y,z)\ge \tfrac{1}{2}\quad \hbox{for }(x,y,z)\in\rr^3.
$$
Note that $\deg f=6$, and the leading form $f_6(x,y,z)=(y^2+z^2)(x^4+y^2z^2)$ has nontrivial zeroes.

Now take any $N\in \rr$ and $\varphi_N(x,y,z)=e^{N(x^2+y^2+z^2)}f(x,y,z)$.
Then for $\xi\ne 0$ we have
$$
\varphi_N(0,\xi^{-1},-\xi)=e^{N(\xi^{-2}+\xi^2)}(\xi^{-2}+\xi^2+1)(1+\xi^{-2})
$$
and
$$
\varphi_N(-1,\xi^{-1},-\xi)=\varphi_N(1,\xi^{-1},-\xi)=e^{N(\xi^{-2}+\xi^2)}(\xi^{-2}+\xi^2+1)e^N\xi^{-2}.
$$
Hence for sufficiently large $\xi$, 
$$
\varphi_N(-1,\xi^{-1},-\xi)=\varphi_N(1,\xi^{-1},-\xi)<\varphi_N(0,\xi^{-1},-\xi),
$$
therefore $\varphi_N$ can  not be a convex function.
\end{exa}

The assumption \eqref{assumptioncompactvar} in Theorem \ref{convexonconvexsetnewvar}, cannot be  replaced by a condition $\lim_{|x|\to\infty} f(x)=\infty$. 
Indead, consider a modification of the previous example of the form
$$
f_k(x,y,z)=(y^2+z^2+1)^k\left[(x-1)^2(x+1)^2 +(yz+1)^2+y^2\right],
$$
where $k\geq 2$. Then $\lim_{|(x,y,z)|\to\infty} f(x,y,z)=\infty$ and the function $\varphi_N(x,y,z)=e^{N(x^2+y^2+z^2)}f_k(x,y,z)$ is not convex for any $N\in\rr$ by the previous argument.

It turns out that the use of a double exponential function leads to a convexity of an appropriate function on $X$. 
We show it in the next section. 

\section{Double exponential convexifying polynomials}\label{secdoubleexp}

In this section, without the assumption that the leading form of a polynomial $f\in\rr[x]$ in $n$ variables has only positive values, we will show that the function $\varPhi _N(x)=e^{e^{N|x|^2}}f(x)$  is strongly convex on a closed and convex semialgebraic set $X\subset\mathbb{R}^n$ (not necessary compact), provided the polynomial $f$ takes positive values on $X$ and $N$ is suficiently large.

\begin{tw}\label{tedoubleexp}
Let $X\subset \rr^n$ be a closed and convex semialgebraic set, and let $f\in\rr[x]$ be a polynomial which has only positive values on $X$. Then there exists $N_0\in\rr$ such that for any $N\geq N_0$ the function $\varPhi _N(x)=e^{e^{N|x|^2}}f(x)$ is strongly convex on $X$.
\end{tw}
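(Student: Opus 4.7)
The plan is to reduce the strong convexity of $\varPhi_N$ on $X$ to a pointwise positivity condition along every affine line that meets $X$, exactly as in the proofs of Theorems \ref{uwypuklanie na zwartym} and \ref{convexonconvexsetnewvar}. For each $(\alpha,\beta)\in\rr^n\times\rr^n$ with $|\beta|=1$ and $\langle\alpha,\beta\rangle=0$, set $\gamma_{\alpha,\beta}(t)=\alpha+t\beta$, $F(t)=f(\gamma_{\alpha,\beta}(t))$, and $E(t)=e^{N(t^2+|\alpha|^2)}$. Differentiating $\varPhi_N\circ\gamma_{\alpha,\beta}=e^{E}F$ twice, with $E'=2NtE$ and $E''=(2N+4N^2t^2)E$, and regrouping in powers of $u=2NtE$ yields
$$
(\varPhi_N\circ\gamma_{\alpha,\beta})''(t)=e^{E(t)}\bigl[u^2F(1+1/E)+2uF'+2NEF+F''\bigr].
$$
The bracketed quadratic in $u$ has positive leading coefficient $F(1+1/E)$; completing the square reduces pointwise positivity to
$$
\mathcal{A}_N(t,\alpha,\beta):=2NEF^2(E+1)+FF''(E+1)-(F')^2E>0,
$$
so the task is to bound $\mathcal{A}_N$ uniformly away from zero on the set of triples $(t,\alpha,\beta)$ with $\gamma_{\alpha,\beta}(t)\in X$.

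First I would split $X$ into the compact piece $X_R=\{x\in X:|x|\leq R\}$ and its complement in $X$. On $X_R$, $f$ attains a positive minimum $m_R>0$ and the first and second directional derivatives of $f$ are bounded by some $D_R$; since $E\geq 1$ on $X$, the term $2NEF^2(E+1)\geq 4Nm_R^2$ dominates $|FF''|(E+1)+(F')^2E$ once $N$ exceeds a threshold depending on $R,m_R,D_R$, in complete analogy with Theorem \ref{uwypuklanie na zwartym}. At this stage the double-exponential structure is not yet used.

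The unbounded part is where the semialgebraicity of $X$ enters, through the \L{}ojasiewicz inequality at infinity: for a polynomial $f$ positive on a closed semialgebraic set $X$, there exist constants $c>0$ and $q\geq 0$ such that $f(x)\geq c(1+|x|)^{-q}$ for all $x\in X$. Combined with the elementary polynomial bounds $|F'(t)|,|F''(t)|\leq B(1+|\gamma_{\alpha,\beta}(t)|)^{d-1}$ (with $B$ and $d=\deg f$ depending only on $f$), one has
$$
2NEF^2(E+1)\geq 2Nc^2\,e^{N|\gamma_{\alpha,\beta}(t)|^2}\bigl(1+|\gamma_{\alpha,\beta}(t)|\bigr)^{-2q},
$$
while $|FF''|(E+1)+(F')^2E$ grows only polynomially in $|\gamma_{\alpha,\beta}(t)|$ times $(E+1)$. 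Since $e^{N|\gamma_{\alpha,\beta}(t)|^2}$ beats every polynomial in $|\gamma_{\alpha,\beta}(t)|$, one can pick $R$ so large that $\mathcal{A}_N\geq c_0>0$ on $X\setminus X_R$ for every $N\geq 1$.

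To coordinate the two regimes I would first fix $R$ large enough for the \L{}ojasiewicz-plus-double-exponential estimate on $X\setminus X_R$, and only then pick $N\geq N_0$ large enough that the compact estimate on $X_R$ also yields $\mathcal{A}_N\geq c_0>0$. Feeding $\mathcal{A}_N\geq c_0$ back into the formula for $\psi''$ gives $(\varPhi_N\circ\gamma_{\alpha,\beta})''(t)\geq c_0\,e^{E(t)}/(F(t)(E(t)+1))$, and this lower bound has a strictly positive infimum over all of $X$ because, as $|x|\to\infty$, the double exponential $e^{E}$ overwhelms the at-most-polynomial growth of $F(E+1)$. The main obstacle I expect is precisely the \L{}ojasiewicz-at-infinity step: it is where semialgebraicity is indispensable, and its unavailability in the more general setting of the companion Theorem \ref{tedoubleexp2} is exactly what forces the stronger assumption $\inf_X f>0$ there.
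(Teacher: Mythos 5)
Your proposal is correct and follows essentially the same route as the paper: reduce to affine lines, express the second derivative as $e^{E}$ times a quadratic whose discriminant must be negative, and then use the H\"ormander--\L{}ojasiewicz lower bound $f(x)\geq C(1+|x|^K)^{-\mathcal L}$ on the semialgebraic set $X$ together with polynomial bounds on the directional derivatives so that the double exponential $e^{N|x|^2}$ in the dominant term overwhelms everything else. The only cosmetic difference is that you split $X$ into a compact ball and its complement and treat the two regimes separately, while the paper combines them into a single uniform inequality $\tilde D^2(2+|x|^K)^{\mathcal L+2}<NC^2(1+e^{N|x|^2})$ valid for all $x$; this is an organizational choice, not a different argument.
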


\begin{proof}
Let $f$ be of the form \eqref{funkcja f}, $d=\deg f$. Then $f(x)$, 
, the first and second directional derivatives of $f$ in directions of vectors of length $1$ at $x\in X$, are bounded by
 $\tilde D(1+|x|^d)$, where
$$
\tilde D:=|a_0|+\sum_{|\nu|=1}|a_\nu|+\sum_{j=2}^d\sum_{|\nu |=j}j(j-1)|a_ \nu |.
$$

Take an affine line in $\mathbb R^n$ of the form
$$
\gamma _{\alpha, \beta}(t):=\beta t + \alpha, \ \ t\in \mathbb{R}, 
$$
where $(\alpha,\beta)\in A$ and the set $A$ is defined in \eqref{eqdefA}. Then $|\beta|=1$, $\langle \alpha,\beta\rangle=0$, and $|\gamma_{\alpha,\beta}(t)|^2=t^2+|\alpha|^2$. 
Let write the second derivative of $\varPhi_N\circ \gamma_{\alpha,\beta}$ in the form
$$
(\varPhi_N\circ\gamma_{\alpha,\beta})''(t)=e^{e^{N(t^2+|\alpha|^2)}}(a(t) t^2 +b(t)t +c(t)),
$$
where 
\begin{equation*}
\begin{split}
  a(t) & = 4N^2(e^{N(t^2+|\alpha|^2)}+e^{2N(t^2+|\alpha|^2)})f\circ\gamma_{\alpha,\beta}(t),
  \\ 
b(t) & =  4Ne^{N(t^2+|\alpha|^2)}(f\circ\gamma_{\alpha,\beta})'(t),
\\ 
c(t) & = 2Ne^{N(t^2+|\alpha|^2)}f\circ\gamma_{\alpha,\beta}(t)
+(f\circ\gamma_{\alpha,\beta})''(t)
\end{split}
\end{equation*}
The discriminant of the polynomial 
$
P_t(\lambda) = a(t)\lambda^2 + b(t)\lambda+c(t)
$ 
is of the form 
\begin{equation*}
\begin{split}
\Delta=16N^2e^{2N(t^2+|\alpha|^2)}
&\left[\left((f\circ\gamma_{\alpha,\beta})'(t)\right)^2\right.
\\
&-f\circ\gamma_{\alpha,\beta}(t)(f\circ\gamma_{\alpha,\beta})''(t)\left(1- e^{-N(t^2+|\alpha|^2)}\right)\\
&\left.-2N(f\circ\gamma_{\alpha,\beta})^2(t)\left(1+ e^{N(t^2+|\alpha|^2)}\right)\right].
\end{split}
\end{equation*}
So, by the choice of the 
number $\tilde D$, we have
\begin{equation*}
\begin{split}
\Delta\leq 32N^2e^{2N(t^2+|\alpha|^2)}&\left[ \tilde D^2\left(1+|\gamma_{\alpha,\beta}(t)|^d\right)^2\right.\\
&\;\; \left.-N(f\circ\gamma_{\alpha,\beta})^2(t)\left(1+ e^{N|\gamma_{\alpha,\beta}(t)|^2}\right)
\right].
\end{split}
\end{equation*}

Since the set $X$ is semialgebraic and $f^{-1}(0)\cap X=\emptyset$, then by H\"ormander-{\L}ojasiewicz inequality, see eg. \cite[Corollary 2.4]{KS0}, there exist $C,K,\mathcal{L}>0$, where $K,\mathcal{L}\in \mathbb{Z}$, $K\geq d$, depend on $d$ and the {\it complexity}  of $X$, (i.e., degrees and the number of polynomials describing $X$)   such that 
$$
f(x)\geq C\left({1+|x|^K}\right)^{-\mathcal{L}}\quad \hbox{for }x\in X.
$$
Moreover, the numbers $K,\mathcal{L}$ are effectively computable. 
By the above,
\begin{equation*}
\begin{split}
\Delta\leq 32N^2e^{2N(t^2+|\alpha|^2)}\left({1+|\gamma_{\alpha,\beta}(t)|^K}\right)^{-\mathcal{L}}&\left[ \tilde D^2\left(2+|\gamma_{\alpha,\beta}(t)|^K\right)^{\mathcal{L}+2}\right.
\\
&\left.\;\; -NC^2
\left(1+ e^{N|\gamma_{\alpha,\beta}(t)|^2}\right)\right].
\end{split}
\end{equation*}
If $N$ is large enough, then for   any $x\in\rr^n$ we have $$\tilde D^2(2+|x|^K)^{\mathcal{L}+2}<NC^2(1+e^{N|x|^2}).$$
Therefore  $\Delta<0$, so   
$P_t(\lambda) >0$ for any $\lambda \in \rr$.
Consequently 
$$
(\varPhi_N\circ\gamma_{\alpha,\beta})''(t)=e^{e^{N(t^2+|\alpha|^2)}}P_t(t) >0,
$$
 for $t\in \rr$. Note that  $\lim_{|t|\to \infty}(\varPhi_N\circ \gamma_{\alpha,\beta})''(t)=+\infty$, hence  there exists $\mu>0$ such that $(\varPhi_N\circ\gamma_{\alpha,\beta})''(t)\geq \mu $ for $t\in\rr$. Moreover, the number $\mu$  can  be chosen independet of $\gamma_{\alpha,\beta}$. This gives the assertion.
\end{proof}

\begin{rem}\label{remeffectcomputed}
The number $N_0$ in Theorem \ref{tedoubleexp} can be effectively computed, provided we can estimate the constant $C$. More precisely, under notations in the proof, if $k > (\mathcal{L}+2)K$, then for $|x|\geq 1$ we have
$$
NC^2\left(1+e^{N|x|^2}\right)\geq NC^2+\sum_{j=0}^k\frac{C^2N^{j+1}}{j!}|x|^{2j}>\tilde D^2\left(2+|x|^K\right)^{\mathcal{L}+2}
$$
for
$$
N>k!\max_{i=0,\ldots,\mathcal{L}+2}\left({\tilde D^2C^{-2}2^{\mathcal{L}+2-i}\binom{\mathcal{L}+2}{i}}\right).
$$
If additionally $N\geq \tilde D^2 C^{-2}3^{\mathcal{L}+2}$, then the above inequality holds for any
 $x\in\rr^n$.
\end{rem}

\begin{rem}\label{remdoubleexp}
We cannot omit the assumption in Theorem \ref{tedoubleexp} that the set $X$ is semialgebraic. For instance if $f(x,y)=-y^2+y$ and $X=\{(x,y)\in\rr^2:e^{-e^x}\leq y\leq \tfrac{1}{2},\; x\geq 0\}$, then $f(x,y)>0$ on $X$, but the function $\varPhi_N(x,y)$ is not convex on $X$ for any $N\in \rr$. 
\end{rem}

Assuming that $f(x)\geq m$ on $X$, for some $m>0$, we can omit the assumption in Theorem \ref{tedoubleexp} on semialgebraicity of $X$. More precisely, by a similar argument as in the proof of Theorem \ref{tedoubleexp} we obtain

\begin{tw}\label{tedoubleexp2}
Let $X\subset \rr^n$ be a closed and convex set, and let $f\in\rr[x]$ be a polynomial such that $f(x)\geq m$ for $x\in X$ and some $m>0$. Then there exists $N_0\in\rr$ such that for any $N\geq N_0$ the function $\varPhi _N(x)=e^{e^{N|x|^2}}f(x)$ is strongly convex on $X$.
\end{tw}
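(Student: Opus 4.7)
The plan is to follow the proof of Theorem \ref{tedoubleexp} line by line, replacing its use of the H\"ormander--{\L}ojasiewicz lower bound on $f$ (the only ingredient that required semialgebraicity of $X$) by the uniform lower bound $f\geq m$ now assumed directly. I would reduce strong convexity of $\varPhi_N$ on $X$ to a uniform positive lower bound for $(\varPhi_N\circ\gamma_{\alpha,\beta})''(t)$ along every affine line $\gamma_{\alpha,\beta}(t)=\beta t+\alpha$ with $(\alpha,\beta)\in A$ (the set defined in \eqref{eqdefA}). The identical computation as in the proof of Theorem \ref{tedoubleexp} gives
\[
(\varPhi_N\circ\gamma_{\alpha,\beta})''(t) = e^{e^{N(t^{2}+|\alpha|^{2})}}\bigl(a(t)t^{2}+b(t)t+c(t)\bigr),
\]
with the same expressions for $a,b,c$, and treating this as a quadratic in the appropriate variable produces the same upper bound on the discriminant
\[
\Delta \leq 32 N^{2} e^{2N(t^{2}+|\alpha|^{2})}\Bigl[\tilde D^{2}\bigl(1+|\gamma_{\alpha,\beta}(t)|^{d}\bigr)^{2} - N(f\circ\gamma_{\alpha,\beta})^{2}(t)\bigl(1+e^{N|\gamma_{\alpha,\beta}(t)|^{2}}\bigr)\Bigr],
\]
with $\tilde D$ the same constant bounding $f$ and its first and second directional derivatives by $\tilde D(1+|x|^d)$.

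Next I would substitute the standing hypothesis $f\geq m$ on $X$ into the negative term, obtaining
\[
\Delta \leq 32 N^{2} e^{2N(t^{2}+|\alpha|^{2})}\Bigl[\tilde D^{2}\bigl(1+|y|^{d}\bigr)^{2} - Nm^{2}\bigl(1+e^{N|y|^{2}}\bigr)\Bigr],\qquad y=\gamma_{\alpha,\beta}(t).
\]
A polynomial-versus-exponential comparison, strictly simpler than the one already carried out in Remark \ref{remeffectcomputed} (because $m$ is a constant rather than a function of $x$ with only polynomial decay), shows that for $N$ sufficiently large, depending only on $d$, $\tilde D$ and $m$, the bracket is strictly negative uniformly in $y\in\rr^n$. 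Hence $\Delta<0$, so $a(t)t^{2}+b(t)t+c(t)>0$, and consequently $(\varPhi_N\circ\gamma_{\alpha,\beta})''(t)>0$ for every $t$ with $\gamma_{\alpha,\beta}(t)\in X$.

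Finally, since $(\varPhi_N\circ\gamma_{\alpha,\beta})''(t)\to+\infty$ as $|t|\to\infty$ and the expression is continuous in $t$, one extracts a uniform positive lower bound $\mu$, independent of $(\alpha,\beta)\in A$, for $(\varPhi_N\circ\gamma_{\alpha,\beta})''$ at every $t$ with $\gamma_{\alpha,\beta}(t)\in X$; this yields strong convexity of $\varPhi_N$ on $X$. The only real work is the polynomial-versus-exponential estimate, which is textbook; the hypothesis $f\geq m$ serves precisely to bypass semialgebraicity, which in Theorem \ref{tedoubleexp} entered only through the {\L}ojasiewicz inequality.
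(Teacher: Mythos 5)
Your proposal is correct and follows precisely the route the paper itself indicates for Theorem~\ref{tedoubleexp2}, namely rerunning the proof of Theorem~\ref{tedoubleexp} with the H\"ormander--{\L}ojasiewicz lower bound on $f$ (the sole place semialgebraicity of $X$ entered) replaced by the hypothesis $f\geq m$ on $X$, so that the bracket becomes $\tilde D^{2}(1+|y|^{d})^{2}-Nm^{2}(1+e^{N|y|^{2}})$ and the standard polynomial-versus-exponential estimate gives $\Delta<0$ uniformly for $N$ large. The remaining steps (positivity and uniform lower bound for $(\varPhi_N\circ\gamma_{\alpha,\beta})''$, hence strong convexity) are identical to those in the proof of Theorem~\ref{tedoubleexp}, so there is nothing to add.
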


\begin{rem}\label{tedoubleexp2bis}
By a similar argument as for the proof of Theorem \ref{tedoubleexp2} we obtain that the assertion of this Theorem occurs not only for the function $\varPhi_{N}$ but also for the function $\Phi_{N}(x)=e^{Ne^{|x|^2}}f(x)$. More precisely, we have:

Let $X\subset \rr^n$ be a closed and convex set, and let $f\in\rr[x]$ be a polynomial such that $f(x)\geq m$ for $x\in X$ and some $m>0$. Then there exists $N_0\in\rr$ such that for any $N\geq N_0$ the function $\Phi _{N}$ 
 is strongly convex on $X$.
\end{rem}

A similar argument as for Theorem \ref{tedoubleexp} gives the following theorems.

\begin{tw}\label{convexonconvexsetnewxdoublevarsemi}
Let $X\subset \rr^n$ be a convex closed semialgebraic set and let $r>0$. If $f$ is a polynomial such that  
\begin{equation}\label{estfcompact1xdoublevarsemi}
f(x)>0\quad\hbox{for }x\in X,
\end{equation}
then there exists $N_0\in\nn$ such that for any integer $N\geq N_0$ and any $\xi\in\rr^n$, $|\xi|\leq r$, 
 the function $\varPhi_{N,\xi}(x)=e^{e^{N|x-\xi|^2}}f(x)$ is strongly convex on $X$. 
 
Moreover, there exists $\alpha\in\rr$ such that the function
$$
\varPhi_\xi(x)=e^{e^{|x-\xi|^2}}[f(x)+\alpha], \quad x\in \mathbb{R}^n 
$$
is strongly convex on $X$, provided $ \xi\in \mathbb{R}^n$, $|\xi|\leq r$. 
\end{tw}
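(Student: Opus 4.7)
The plan is to mirror the proof of Theorem \ref{tedoubleexp}, incorporating the shift by $\xi$ and tracking uniformity for $|\xi|\leq r$. As before, I would parametrize affine lines by $\gamma_{\alpha,\beta}(t)=\beta t+\alpha$ with $(\alpha,\beta)\in A$, and reduce strong convexity of $\varPhi_{N,\xi}$ on $X$ to showing $(\varPhi_{N,\xi}\circ\gamma_{\alpha,\beta})''(t)\geq\mu>0$ for $t$ with $\gamma_{\alpha,\beta}(t)\in X$, with $\mu$ independent of $\alpha,\beta,\xi$. Setting $u(t)=|\gamma_{\alpha,\beta}(t)-\xi|^2$ and $s=t-\langle\beta,\xi\rangle$ (so that $u'=2s$, $u''=2$ using $\langle\alpha,\beta\rangle=0$), a direct differentiation yields
\[
(\varPhi_{N,\xi}\circ\gamma_{\alpha,\beta})''(t)=e^{e^{Nu}}\bigl[4N^2 e^{Nu}(e^{Nu}+1)g\,s^{2}+4Ne^{Nu}g'\,s+2Ne^{Nu}g+g''\bigr]
\]
with $g=f\circ\gamma_{\alpha,\beta}$. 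The leading coefficient in $s$ is positive, so it suffices to show that the discriminant is strictly negative, i.e.,
\[
e^{Nu}(g')^{2}-(e^{Nu}+1)\,g\,g'' < 2N\,e^{Nu}(e^{Nu}+1)\,g^{2}.
\]

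I would bound $|g'|,|g''|\leq\tilde D(1+|\gamma_{\alpha,\beta}(t)|^{d})$ with $\tilde D$ from the proof of Theorem \ref{tedoubleexp}, and invoke the H\"ormander--{\L}ojasiewicz inequality on the semialgebraic set $X$ to obtain $f(x)\geq C(1+|x|^{K})^{-\mathcal L}$ for effectively computable constants $C,K,\mathcal L>0$. The new ingredient that yields uniformity in $\xi$ is the elementary estimate $|x-\xi|\geq|x|-r$ valid for $|x|\geq r$: it implies $e^{Nu}\geq e^{N(|\gamma_{\alpha,\beta}(t)|-r)^{2}}$, which dominates any polynomial in $|\gamma_{\alpha,\beta}(t)|$. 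Choosing $N_{0}$ large enough in terms of $r,\tilde D,d,C,K,\mathcal L$, by an estimate analogous to Remark \ref{remeffectcomputed}, drives the discriminant below $-16N^{2}e^{Nu}\varepsilon$ for some $\varepsilon>0$ uniformly; the compact regime $|\gamma_{\alpha,\beta}(t)|\leq r$ is handled separately using that $f$ is bounded below on $X\cap\overline{B}(0,r)$ while polynomials are bounded there.

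The main obstacle is precisely this uniform-in-$\xi$ domination, since the centre of the exponential barrier may wander over $\overline{B}(0,r)$; the bound $|x-\xi|\geq|x|-r$ absorbs all $\xi$-dependence into a single translation of the exponent, and the argument closes as in Theorem \ref{tedoubleexp}. For the ``moreover'' assertion I would take $N=1$ and replace $f$ by $f+\alpha$: the derivatives coincide with $f',f''$, while $f+\alpha\geq\alpha$ on $X$. The analogous discriminant condition then reads
\[
(g')^{2}-(1+e^{-u})(g+\alpha)\,g'' < 2(1+e^{-u})\,e^{u}\,(g+\alpha)^{2},
\]
and the quadratic-in-$\alpha$ right-hand side dominates the at-most-linear-in-$\alpha$ left-hand side once $\alpha$ is taken large, with the unbounded regime $|x|\geq r$ again controlled by $e^{(|x|-r)^{2}}$ beating polynomials in $|x|$.
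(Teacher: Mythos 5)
Your proposal is correct and is exactly the adaptation the paper has in mind: the paper proves Theorem \ref{tedoubleexp} (the case $\xi=0$) in detail and then only asserts that ``a similar argument'' gives Theorem \ref{convexonconvexsetnewxdoublevarsemi}, and your argument is that similar argument spelled out. The discriminant computation with $u(t)=|\gamma_{\alpha,\beta}(t)-\xi|^2$, $s=t-\langle\beta,\xi\rangle$ matches what one gets by differentiating, the bounds on $g'$, $g''$ and the H\"ormander--{\L}ojasiewicz lower bound for $g$ are the ones the paper uses, and the one genuinely new point needed for uniformity in $|\xi|\leq r$ --- namely $u\geq(|x|-r)^2$ for $|x|\geq r$, so that $e^{Nu}$ still dominates polynomials in $|x|$ with constants independent of $\xi$ --- is precisely the right observation; the ``moreover'' part via the quadratic-in-$\alpha$ versus linear-in-$\alpha$ comparison likewise works.
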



\begin{tw}\label{convexonconvexsetnewxdouble}
Let $X\subset \rr^n$ be a convex closed set and let $r>0$. Assume that 
$f$ is a polynomial of degre $d>0$ such that
 there exists $m\in\rr$ such that
\begin{equation}\label{estfcompact1xdouble}
0<m < \inf \{f(x):x\in X\}.
\end{equation}
Then there exists $N_0\in\nn$ such that for any integer $N\geq N_0$ and any $\xi\in\rr^n$, $|\xi|\leq r$
 the function $\varPhi_{N,\xi}(x)=e^{e^{N|x-\xi|^2}}f(x)$ is strongly convex on $X$. 
 
Moreover, there exists $\alpha\in\rr$ such that the function
$$
\varPhi_\xi(x)=e^{e^{|x-\xi|^2}}[f(x)+\alpha], \quad x\in \mathbb{R}^n 
$$
is strongly convex on $X$, provided $ \xi\in \mathbb{R}^n$, $|\xi|\leq r$. 
\end{tw}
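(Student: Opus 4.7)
The plan is to mimic the line-restriction argument used in Theorems \ref{tedoubleexp} and \ref{tedoubleexp2}, introducing the shift $\xi$ carefully so that everything goes through uniformly over $|\xi|\leq r$. As usual it suffices to prove strong convexity of each composition $\varPhi_{N,\xi}\circ\gamma_{\alpha,\beta}$, where $\gamma_{\alpha,\beta}(t)=\beta t+\alpha$ with $|\beta|=1$ and $\langle\alpha,\beta\rangle=0$, on the interval $I_{\alpha,\beta}=\{t:\gamma_{\alpha,\beta}(t)\in X\}$, with a lower bound on the second derivative that is independent of $\alpha$, $\beta$, and of $\xi$ with $|\xi|\leq r$.

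Writing $g(t):=f(\gamma_{\alpha,\beta}(t))$, $h(t):=|\gamma_{\alpha,\beta}(t)-\xi|^2$ and $u:=e^{Nh(t)}$, one computes $h'(t)=2(t-\langle\beta,\xi\rangle)$ and $h''(t)=2$, so a direct differentiation (exactly as in the proof of Theorem \ref{tedoubleexp}) yields
$$(\varPhi_{N,\xi}\circ\gamma_{\alpha,\beta})''(t)=e^{e^{Nh(t)}}\bigl[4N^2 u(1+u)g(t)\lambda^2+4Nu\,g'(t)\lambda+2Nu\,g(t)+g''(t)\bigr],$$
with $\lambda:=t-\langle\beta,\xi\rangle$. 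Viewing the bracket as a quadratic in $\lambda$, its discriminant is
$$\Delta=16N^2 u\bigl[u(g'(t))^2-(1+u)g(t)(2Nu\,g(t)+g''(t))\bigr],$$
so the task reduces to proving $\Delta<0$, uniformly in $(\alpha,\beta,\xi,t)$, for all $N\geq N_0$.

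The analytic ingredients are the uniform positivity $g(t)\geq m>0$ coming from \eqref{estfcompact1xdouble} and the polynomial bound $|g(t)|,|g'(t)|,|g''(t)|\leq\tilde D(1+|\gamma_{\alpha,\beta}(t)|^d)$, where $\tilde D$ is the constant introduced in the proof of Theorem \ref{tedoubleexp}. Setting $\rho:=|\gamma_{\alpha,\beta}(t)|$, the desired inequality takes the schematic form $2Nm^2 u(1+u)>C\tilde D^2(1+\rho^d)^2(1+u)$, which I would verify in two regimes. On the compact part $\rho\leq 2r+1$, the polynomial factor $(1+\rho^d)^2$ is bounded in terms of $r$, so the inequality follows for $N$ large since $u\geq 1$. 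On the complementary part $\rho>2r+1$, the hypothesis $|\xi|\leq r$ gives $|\gamma_{\alpha,\beta}(t)-\xi|\geq\rho/2$, whence $u\geq e^{N\rho^2/4}$, which dominates any polynomial in $\rho$ once $N\geq 1$; this bound is uniform in $\xi$.

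Once $\Delta<0$ uniformly, the bracketed quadratic in $\lambda$ is strictly positive, and since the $u^2$ term forces $(\varPhi_{N,\xi}\circ\gamma_{\alpha,\beta})''(t)\to+\infty$ as $|t|\to\infty$, a continuity-plus-properness argument produces a uniform positive lower bound $\mu$, establishing strong convexity of $\varPhi_{N,\xi}$ on $X$. For the moreover statement one fixes $N=1$ and chooses $\alpha\in\rr$ so large that $f+\alpha$ satisfies the same type of discriminant estimate: a large additive constant in $f$ plays the role of the large multiplicative $N$ in the inequality above. The main obstacle is precisely this uniformity in $\xi$; the delicate combination is $|x|$ large with $|x-\xi|$ small, and the hypothesis $|\xi|\leq r$ is exactly what rules this out via $|x-\xi|\geq|x|/2$ for $|x|\geq 2r$, allowing the double exponential factor to absorb all polynomial growth of $f$ and its derivatives.
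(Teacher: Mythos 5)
Your proposal is correct and takes essentially the same route as the paper: the paper proves Theorem \ref{tedoubleexp} in full and then disposes of Theorems \ref{tedoubleexp2}--\ref{convexonconvexsetnewxdouble} with ``a similar argument,'' and your write-up is precisely that adaptation carried out explicitly, with the shift variable $\lambda=t-\langle\beta,\xi\rangle$ replacing $t$ in the quadratic, the same discriminant computation, and the same bound $\tilde D(1+|x|^d)$ on $f$ and its directional derivatives. The one ingredient the paper leaves implicit and you correctly identify and supply is the role of $|\xi|\leq r$: the inequality $|x-\xi|\geq|x|/2$ for $|x|\geq 2r$ guarantees $u=e^{N|x-\xi|^2}\geq e^{N|x|^2/4}$, which makes the double-exponential factor dominate the polynomial $(1+\rho^d)^2$ uniformly in $\xi$, $\alpha$, $\beta$. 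Your reduction of the ``moreover'' clause to ``a large additive constant in $f$ plays the role of the large multiplicative $N$'' is also the intended mechanism, though one should note in passing that adding $\alpha$ to $f$ also increases the bound on $|g|$ (hence on $|gg''|$) by $O(\alpha)$, which is still dominated by the $O(\alpha^2)$ gain in $m^2$; and the uniform lower bound $\mu$ for the second derivative, which you invoke via ``continuity-plus-properness,'' is obtained just as tersely in the paper's own proof of Theorem \ref{tedoubleexp}, so your level of detail matches the source.
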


It is impossible to obtain $N$ in the above theorem, such that the function $\varPhi_{N,\xi}$ is convex for any $\xi\in X$ as the following example shows.

\begin{exa}\label{exacounter2}
Let $f\in \rr[x,y,z]$ be a polynomial of the form
$$
f(x,y,z)=\left[(yz+1)^2+y^2\right]\left[\left(xz^2-1\right)^2\left(xz^2+1\right)^2 +y^2+z^2+1\right].
$$
Analogously as in Example \ref{exacounter} 
 we see that 
$$
f(x,y,z)\ge \tfrac{1}{2}\quad \hbox{for }(x,y,z)\in\rr^3,
$$
and the leading form $f_{16}(x,y,z)=y^2z^{10}x^4$ has nontrivial zeroes.

Now take any $N\in \rr$ and $\varPhi_N(x,y,z)=e^{e^{N(x^2+y^2+z^2)}}f(x,y,z)$.
Then for $\xi=(0,t^{-1},-t)$, $t>0$ we have
$$
\frac{\partial^2 \varPhi_{N,\xi}}{\partial x^2}(\xi)=e\left(2Nf(\xi)+\frac{\partial^2 f}{\partial x^2}(\xi)\right).
$$
Since 
$$
f(\xi)=2t^{-2}+t^{-4}+1
$$
and
$$
\frac{\partial^2 f}{\partial x^2}(\xi)=-4t^2,
$$
then 
we easily see that $\frac{\partial^2 \varPhi_{N,\xi}}{\partial x^2}(\xi)<0$ for sufficiently large $t$. So, $\varPhi_{N,\xi}$ can not be a convex function.
\end{exa}

\begin{rem}\label{remtripleexpmotnotimproves}
It is worth noting that the use of triple exponential convexifying $\phi_N(x)=e^{e^{e^{N|x|^2}}}f(x)$ of a polynomial $f$ does not improve convexity of the function  
  $\phi_{N,\xi}(x)=e^{e^{e^{N|x-\xi|^2}}}f(x)$ regardless of $\xi\in X$.
\end{rem}

\section{Algorithm for searching lower critical points}\label{Algorithm0}

\subsection{Searching lower critical points in a compact set}

In this part we give an algorithm which produces, starting from an arbitrary point, a sequence of points converging to a lower critical point of a polynomial on a convex compact semialgebraic set. A similar algorithm was proposed in \cite{KS}.

Let $X\subset \mathbb R^n$ be a closed set and let $f$ be a function of class $C^1$ in a neighborhood $U\subset \mathbb R^n$ of  $X$. 
We denote the set of lower critical points of the function $f$ on the set $X$ by $\Sigma _X f.$ It is obvious that the set of ordinary critical points $\Sigma f$ of the function $f$ is contained in the set $\Sigma _X f.$ 

Our algorithm for approximation of lower critical points  of $f$ is based on the iteration of computation of the smallest value of the strongly convex function $\varphi _{\xi }$ on the convex and compact set $X$. More precisely, let 
$$
R\geq
\max\{|x|:x\in X\}.
$$
 Take any polynomial $f\in\mathbb{R}[x]$ of the form \eqref{funkcja f}. Let 
$$
m=-\sum_{j=0}^dR^j\sum_{|\nu |=j}|a_{\nu }|,
$$
and let 
$$
D>D_n (f,2R).
$$
Then we have  
$$
f(x)-m+D\geq D \quad\hbox{for }x\in X, 
$$
and from Corollary \ref{uwypuklanie na zwartymcor2}, we have that for any $\xi\in X$, the function 
$$
\varphi_\xi(x)=e^{|x-\xi|^2}[f(x)-m+D],\quad x\in \mathbb{R}^n
$$
is $\mu$-strongly convex on $X$ for some $\mu>0$. Since we are looking for lower critical points of $ f $, so without loss of generality, we may assume that $-m+D=0$, therefore
$$
\varphi_\xi(x)=e^{|x-\xi|^2}f(x),\quad x\in \mathbb{R}^n
$$
is $\mu$-strongly convex function in $X$ for any $\xi\in X$.

Any strictly convex function $\varphi$ defined on a compact and convex set $X$ has the unique point, denoted by $\operatorname{argmin}_X \varphi$, in which the function $\varphi$ has the minimal value on the set $X$. Therefore, chosing an arbitrary point $a_0 \in X$, we can determine by induction a sequence $a_\nu \in X$, $\nu\in\mathbb{N}$, in the following way
\begin{equation}\label{eqanu}
a_\nu :=\operatorname{argmin}_X \varphi_{a_{\nu -1}}\quad \hbox{for }\nu \geq 1.
\end{equation}


\begin{tw}\label{approxi}
Let $X\subset \mathbb R^n$ be a compact convex semialgebraic set and $f:\mathbb R ^n \rightarrow \mathbb R$ a positive polynomial on $X$. Let $a_\nu $ be a sequence defined as $a_ \nu :=\operatorname{argmin}_X \varphi_{a_{\nu -1}}$ with $a_0 \in X.$ Then the limit $$
a_*=\lim_{\nu \rightarrow \infty } a_ \nu 
$$ 
exist and $a_* \in \Sigma _X f.$
\end{tw}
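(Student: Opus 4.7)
The plan is to establish four successive properties of the sequence $(a_\nu)$: monotone descent of $f(a_\nu)$, summability of the squared step lengths, lower-criticality of every cluster point, and uniqueness of the limit via a \L{}ojasiewicz-type argument.

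\emph{Step 1 (Descent).} Because $\varphi_{a_{\nu-1}}(a_{\nu-1})=f(a_{\nu-1})$ and $a_\nu$ is the global minimiser of $\varphi_{a_{\nu-1}}$ on $X$, we have $e^{|a_\nu-a_{\nu-1}|^2}f(a_\nu)\le f(a_{\nu-1})$. Since $f>0$ on $X$, this yields $f(a_\nu)\le f(a_{\nu-1})$, so $(f(a_\nu))$ is nonincreasing and bounded below by $\min_X f>0$; let $f_*>0$ be its limit.

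\emph{Step 2 (Summability of squared steps).} The preamble to the theorem provides a modulus of strong convexity $\mu>0$ of $\varphi_\xi$ on $X$ that is independent of $\xi\in X$. Combining $\mu$-strong convexity of $\varphi_{a_{\nu-1}}$ with the first-order optimality of $a_\nu$ on $X$ and evaluating the strong-convexity inequality at $x=a_{\nu-1}\in X$ gives
\[
f(a_{\nu-1})=\varphi_{a_{\nu-1}}(a_{\nu-1})\ge \varphi_{a_{\nu-1}}(a_\nu)+\tfrac{\mu}{2}|a_{\nu-1}-a_\nu|^2\ge f(a_\nu)+\tfrac{\mu}{2}|a_{\nu-1}-a_\nu|^2.
\]
Telescoping yields $\sum_\nu |a_\nu-a_{\nu-1}|^2\le \tfrac{2}{\mu}(f(a_0)-f_*)<\infty$; in particular $|a_\nu-a_{\nu-1}|\to 0$.

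\emph{Step 3 (Cluster points are lower critical).} Compactness of $X$ ensures accumulation points exist. Since $\nabla\varphi_{a_{\nu-1}}(a_\nu)=e^{|a_\nu-a_{\nu-1}|^2}\bigl[2f(a_\nu)(a_\nu-a_{\nu-1})+\nabla f(a_\nu)\bigr]$, dividing the KKT inequality for $a_\nu$ by the positive factor $e^{|a_\nu-a_{\nu-1}|^2}$ gives
\[
\langle 2f(a_\nu)(a_\nu-a_{\nu-1})+\nabla f(a_\nu),\,x-a_\nu\rangle\ge 0\quad\text{for all }x\in X.
\]
If $a_{\nu_k}\to a_*$, then $a_{\nu_k-1}\to a_*$ as well by Step~2, and passing to the limit gives $\langle\nabla f(a_*),x-a_*\rangle\ge 0$ for every $x\in X$, i.e.\ $a_*\in\Sigma_X f$.

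\emph{Step 4 (Uniqueness of the limit; the main obstacle).} The condition $|a_\nu-a_{\nu-1}|\to 0$ forces the set $\Omega$ of accumulation points to be a compact connected subset of $\Sigma_X f$ on which $f\equiv f_*$. The remaining—and hardest—task is to upgrade $\sum|a_\nu-a_{\nu-1}|^2<\infty$ to $\sum|a_\nu-a_{\nu-1}|<\infty$, which would collapse $\Omega$ to a single point. I would do this via the Kurdyka--\L{}ojasiewicz inequality applied to the semialgebraic function $f$ on the semialgebraic set $X$: at every point of $\Omega$ there exist $\theta\in(0,1)$ and $c>0$ such that $(f(x)-f_*)^\theta\le c\,\dist(0,\nabla f(x)+N_X(x))$ in a neighbourhood in $X$. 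Combined with the subgradient bound $\dist(0,\nabla f(a_\nu)+N_X(a_\nu))\le 2f(a_\nu)\,|a_\nu-a_{\nu-1}|$ read off from the KKT condition of Step~3, the classical abstract descent argument (cf.\ \cite{Bolte}) yields $\sum_\nu|a_\nu-a_{\nu-1}|<\infty$, whence $a_\nu\to a_*$ for a single $a_*\in\Sigma_X f$.
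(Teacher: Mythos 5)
Your proposal is mathematically sound and reaches the right conclusion, but the route through Step~4 is presented as a plan rather than a worked-out argument, and it differs in flavour from what the paper does. The paper's proof is explicitly delegated to Theorem~6.5 of \cite{KS}, substituting in three replacement lemmas: (i) $|a_{\nu+1}-a_\nu|=\dist(a_\nu,f^{-1}(f(a_{\nu+1}))\cap X)$, relating the step length to the distance to a level set; (ii) the sufficient-decrease inequality $f(a_{\nu+1})\le e^{-|a_{\nu+1}-a_\nu|^2}\bigl(f(a_\nu)-\tfrac{\mu}{2}|a_{\nu+1}-a_\nu|^2\bigr)$, which is exactly your Step~2; and (iii) a one-dimensional lemma giving a \emph{lower} bound on the step $b_1\ge\eta/(2C)$ when the directional derivative exceeds $\eta$. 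These are combined, following \cite{KS}, with the classical \L{}ojasiewicz gradient inequality for the semialgebraic $f$ restricted to $X$ into a concrete, geometric estimate showing that the step lengths are summable. Your Steps~1--3 match the paper's skeleton (Step~2 is essentially Lemma~5.5 here). Your Step~4 instead appeals to the abstract Kurdyka--\L{}ojasiewicz/descent machinery of Attouch--Bolte et al.\ using the subdifferential bound $\dist(0,\nabla f(a_\nu)+N_X(a_\nu))\le 2f(a_\nu)|a_\nu-a_{\nu-1}|$ read off from the KKT condition; this is correct and is the ``modern'' packaging of the same \L{}ojasiewicz idea, but it replaces the paper's level-set distance lemma and the one-dimensional step lower bound by a black-box theorem. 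That is a reasonable trade: your route is cleaner and more recognisable to optimizers, while the paper's route is more self-contained (modulo \cite{KS}) and produces the explicit geometric lemmas it reuses elsewhere. The one thing you should tighten is that the KL inequality needs to hold with a \emph{uniform} exponent and constant in a neighbourhood of the whole compact accumulation set $\Omega$, and you should make explicit that $\Omega\subset f^{-1}(f_*)$ with $f$ and $X$ semialgebraic guarantees this; as written, ``at every point of $\Omega$ there exist $\theta,c$'' gives pointwise constants only.
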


The proof of Theorem \ref{approxi} follows word by word  
 the proof of Theorem 6.5 in \cite{KS}, where we should use the following  three lemmas instead of the corresponding  lemmas in \cite{KS}. 


\begin{lemat}
For any $\nu \in \mathbb N$, we have
$$|a_{\nu +1}-a_{\nu }|=\operatorname{dist}(a_\nu , f^{-1}(f(a_{\nu +1})) \cap X).$$
\end{lemat}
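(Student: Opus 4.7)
The plan is to establish the equality by proving both inequalities separately, the first being immediate and the second relying on the minimality property that defines $a_{\nu+1}$.

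First, observe that $a_{\nu+1}\in X$ and obviously $f(a_{\nu+1})=f(a_{\nu+1})$, so $a_{\nu+1}\in f^{-1}(f(a_{\nu+1}))\cap X$. Consequently
\[
\operatorname{dist}(a_\nu,\,f^{-1}(f(a_{\nu+1}))\cap X)\leq |a_{\nu+1}-a_\nu|,
\]
giving one of the two inequalities for free.

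For the reverse inequality I would argue by contradiction. Suppose there exists $b\in f^{-1}(f(a_{\nu+1}))\cap X$ with $|b-a_\nu|<|a_{\nu+1}-a_\nu|$. Since $f$ is assumed positive on $X$, the values $f(b)=f(a_{\nu+1})$ are strictly positive. Then using the monotonicity of the exponential function we get
\[
\varphi_{a_\nu}(b)=e^{|b-a_\nu|^2}f(b)<e^{|a_{\nu+1}-a_\nu|^2}f(a_{\nu+1})=\varphi_{a_\nu}(a_{\nu+1}),
\]
which contradicts the definition $a_{\nu+1}=\operatorname{argmin}_X \varphi_{a_\nu}$. Hence every point of $f^{-1}(f(a_{\nu+1}))\cap X$ lies at distance at least $|a_{\nu+1}-a_\nu|$ from $a_\nu$, so
\[
\operatorname{dist}(a_\nu,\,f^{-1}(f(a_{\nu+1}))\cap X)\geq |a_{\nu+1}-a_\nu|,
\]
and combining the two inequalities gives the claim.

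There is no real obstacle here; the argument uses only the positivity of $f$ on $X$ (which is built into the theorem hypothesis), the uniqueness of the minimum of the strongly convex function $\varphi_{a_\nu}$ on the convex compact set $X$, and the strict monotonicity of $t\mapsto e^t$. The only subtlety worth stating is that the set $f^{-1}(f(a_{\nu+1}))\cap X$ is closed (as $f$ is continuous and $X$ is closed), so the distance is attained, which makes the contradiction above immediate.
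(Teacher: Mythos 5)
Your proof is correct and is essentially the intended argument: the paper itself does not display a proof of this lemma (it cites the analogous lemma in [KS] for the polynomial weight $(1+|x-\xi|^2)^N$), and the adaptation to the exponential weight $e^{|x-\xi|^2}$ is exactly what you wrote — one direction is trivial because $a_{\nu+1}$ itself lies in the level set, and the other follows from the minimality defining $a_{\nu+1}$ together with the strict monotonicity of $t\mapsto e^t$ and the positivity of $f$ on $X$. One small remark: the closedness of $f^{-1}(f(a_{\nu+1}))\cap X$ is not actually needed, since the contradiction already arises from the definition of the infimum (any competitor $b$ with $|b-a_\nu|$ smaller than $|a_{\nu+1}-a_\nu|$ suffices, whether or not the distance is attained); alternatively, the reverse inequality can be proved directly without contradiction by noting that $\varphi_{a_\nu}(b)\geq\varphi_{a_\nu}(a_{\nu+1})$ for every $b$ in that set forces $|b-a_\nu|\geq|a_{\nu+1}-a_\nu|$ after cancelling the common positive value of $f$.
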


\begin{lemat}\label{lemma44x}
For any $\nu \in \mathbb N$ we have
$$f(a_{\nu +1})\leq \frac{f(a_\nu )-\frac{\mu }{2}|a_{\nu +1}-a_\nu |^2}{e^{|a_{\nu +1}-a_\nu |^2}}.$$
In particular the sequence $f(a_\nu )$ is decreasing.
\end{lemat}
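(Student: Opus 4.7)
The plan is to exploit the fact, immediate from the definition \eqref{eqanu}, that $a_{\nu+1}$ is the constrained minimizer on $X$ of the $\mu$-strongly convex function $\varphi_{a_\nu}$. The crucial ingredient I will use is the standard quadratic-growth estimate at a constrained minimum of a strongly convex function: if $\varphi$ is $\mu$-strongly convex on a convex set $X$ and $a_*=\argmin_X \varphi$, then
$$\varphi(x) \;\geq\; \varphi(a_*) + \frac{\mu}{2}|x-a_*|^2 \quad \text{for every } x \in X.$$
This is a one-line derivation: applying $\mu$-strong convexity of $\varphi$ along the segment $(1-t)a_*+t x\in X$ together with the minimizing property of $a_*$ yields $\varphi(a_*)\leq \varphi(x)-\frac{\mu}{2}(1-t)|x-a_*|^2$ for all $t\in(0,1]$, and letting $t\to 0^+$ gives the estimate.

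I would then specialize this inequality to $\varphi=\varphi_{a_\nu}$, $a_*=a_{\nu+1}$, and $x=a_\nu \in X$. The key observation is that the exponential factor in $\varphi_{a_\nu}$ collapses at $x=a_\nu$ because $e^{|a_\nu-a_\nu|^2}=1$, so $\varphi_{a_\nu}(a_\nu)=f(a_\nu)$, while $\varphi_{a_\nu}(a_{\nu+1})=e^{|a_{\nu+1}-a_\nu|^2}f(a_{\nu+1})$. Substituting and rearranging gives
$$f(a_\nu) \;\geq\; e^{|a_{\nu+1}-a_\nu|^2}f(a_{\nu+1}) + \frac{\mu}{2}|a_{\nu+1}-a_\nu|^2,$$
and dividing by the positive quantity $e^{|a_{\nu+1}-a_\nu|^2}$ is exactly the displayed bound of the lemma.

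The monotonicity comes essentially for free from the same estimate: since $f$ is positive on $X$, the left-hand side $f(a_{\nu+1})$ is positive, which forces the numerator $f(a_\nu)-\frac{\mu}{2}|a_{\nu+1}-a_\nu|^2$ to be nonnegative; dividing this nonnegative quantity by $e^{|a_{\nu+1}-a_\nu|^2}\geq 1$ can only decrease it, so $f(a_{\nu+1})\leq f(a_\nu)-\frac{\mu}{2}|a_{\nu+1}-a_\nu|^2\leq f(a_\nu)$. I do not foresee any serious obstacle here; the only slightly delicate point is that $a_{\nu+1}$ need not be an interior critical point of $\varphi_{a_\nu}$, so I must phrase the quadratic-growth inequality in its constrained form, but the segment-based derivation sketched above covers exactly this case without requiring differentiability of $\varphi$ beyond what strong convexity already supplies.
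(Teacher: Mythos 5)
Your proof is correct and takes essentially the same route as the paper: both establish the quadratic-growth inequality $\varphi_{a_\nu}(a_\nu)\geq\varphi_{a_\nu}(a_{\nu+1})+\frac{\mu}{2}|a_\nu-a_{\nu+1}|^2$ at the constrained minimizer $a_{\nu+1}$ and then substitute $\varphi_{a_\nu}(a_\nu)=f(a_\nu)$ and $\varphi_{a_\nu}(a_{\nu+1})=e^{|a_{\nu+1}-a_\nu|^2}f(a_{\nu+1})$. The only difference is that the paper derives the intermediate inequality via the gradient: from monotonicity of $t\mapsto\varphi_{a_\nu}(a_\nu+t(a_{\nu+1}-a_\nu))$ it deduces $\langle a_{\nu+1}-a_\nu,\nabla\varphi_{a_\nu}(a_{\nu+1})\rangle\leq 0$ and then invokes the first-order strong-convexity bound, whereas you bypass differentiability with the segment argument and a limit $t\to 0^+$; both are standard and interchangeable here.
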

\begin{proof}
Since $\varphi _\xi $ is strongly convex, the definition of $a_{\nu +1}$ implies that the function
$$
[0,1] \ni t\mapsto \varphi _{a_\nu }(a_\nu +t(a_{\nu +1}-a_\nu ))
$$
decrease, so $\langle a_{\nu +1}-a_{\nu }, \nabla \varphi _{a_{\nu }}(a_{\nu +1 })\rangle \leq 0$.
Again by the fact  that  $\varphi _{a_\nu }$ is $\mu$-strictly convex, we get 
$$
f({a_\nu })\geq f(a_{\nu +1})e^{|a_{\nu }-a_{\nu +1}|^2}+\frac{\mu}{2}|a_{\nu }-a_{\nu +1} |.$$
This gives the assertion.
\end{proof}

We can also addapt the following lemma (\cite[Lemma 6.3]{KS}).
\begin{lemat}
Let $f:[0,\eta ]\rightarrow \mathbb R$ be a $C^1$ function such that $0<f\leq C$ and $f'\leq -\eta $ on $[0,\eta ]$ for some $C\geq \frac{1}{2}$ and $\eta >0.$ 
Assume that $\varphi (x)=e^{x^2}f(x)$ is strictly convex on $[0,\eta ].$ Then $b_1:=\operatorname{argmin}_{[0,\eta ]}\varphi \geq \frac{\eta }{2C}.$ Hence $f(0)-f(b_1)\geq \frac{\eta ^2}{2C}.$
\end{lemat}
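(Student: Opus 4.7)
The plan is to analyze the derivative $\varphi'(x) = e^{x^2}\bigl(2xf(x) + f'(x)\bigr)$ and exploit strict convexity together with the two hypotheses $0 < f \leq C$ and $f' \leq -\eta$. First I would observe that $\varphi'(0) = f'(0) \leq -\eta < 0$, so that $\varphi$ is strictly decreasing at $0$ and therefore $b_1 > 0$. Since $\varphi$ is strictly convex, $\varphi'$ is strictly increasing, so there are only two possibilities for the location of the minimum: either $b_1$ is an interior critical point in $(0,\eta)$, in which case $\varphi'(b_1) = 0$, or $\varphi'$ remains nonpositive throughout $[0,\eta]$ and $b_1 = \eta$.

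In the boundary case $b_1 = \eta$, the desired inequality $b_1 \geq \eta/(2C)$ is immediate from the hypothesis $C \geq 1/2$, which gives $2C \geq 1$. In the interior case $\varphi'(b_1) = 0$ yields $2b_1 f(b_1) = -f'(b_1) \geq \eta$, and then using $f(b_1) \leq C$ I divide to conclude
\[
b_1 \;\geq\; \frac{\eta}{2f(b_1)} \;\geq\; \frac{\eta}{2C}.
\]
So in both cases the first assertion holds.

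For the second assertion, I would integrate the derivative bound. Since $f'(x) \leq -\eta$ throughout $[0,\eta]$, the mean value theorem (or a direct integration) gives
\[
f(b_1) - f(0) \;=\; \int_0^{b_1} f'(x)\,dx \;\leq\; -\eta\, b_1,
\]
and combining with the lower bound $b_1 \geq \eta/(2C)$ from the first part I obtain $f(0) - f(b_1) \geq \eta b_1 \geq \eta^2/(2C)$.

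The only point requiring any care is the case split on whether the minimum is attained in the interior or at the right endpoint; the hypothesis $C \geq 1/2$ is there precisely to cover the boundary case uniformly. Everything else is a direct computation using the product-rule form of $\varphi'$, the sign information coming from $f' \leq -\eta$, and the uniform upper bound $f \leq C$.
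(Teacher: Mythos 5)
Your proof is correct and complete. The paper does not actually spell out a proof of this lemma (it only cites Lemma 6.3 of \cite{KS} as the source to adapt), but your argument is the natural one: the sign of $\varphi'(0)=f'(0)\leq -\eta<0$ forces $b_1>0$, strict convexity forces $\varphi'$ strictly increasing so the minimizer is either the unique interior zero of $\varphi'$ or the right endpoint, and in either case the bound $b_1\geq \eta/(2C)$ follows directly (using $C\geq 1/2$ for the endpoint case and $-f'(b_1)\geq\eta$, $f(b_1)\leq C$ for the interior case), after which the second assertion is a one-line integration of $f'\leq -\eta$.
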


%

\begin{rem}\label{effectivemin}
The function $\varphi_{a_{\nu-1}}$ is defined by using the function $\exp$. 
However, to determine the minimum value of this function on a compact convex semialgebraic set $X$ it is enough to solve only polynomial equations and inequalities. More precisely, the set $X$ is the union of a finite collection of basic semialgebraic sets, so we may assume that
$$
a_\nu\in X=\{x\in\mathbb{R}^n:g_1(x)\geq 0,\ldots,g_k(x)\geq 0\},
$$  
where $g_1,\ldots,g_k\in\mathbb{R}[x]$. Then
$$
 X=\{x\in\mathbb{R}^n:g_1(x)e^{|x-a_{\nu-1}|^2}\geq 0,\ldots,g_k(x)e^{|x-a_{\nu-1}|^2}\geq 0\}.
$$
Therefore, when applying Lagrange Multipliers or Karush-Kuhn-Tucker Theorem to compute the point $a_\nu$ it is enought to solve a system of polynomial equations and inequalities.
\end{rem}

\subsection{Searching lower critical points in an unbounded set}

Let $X\subset \rr^n$ be a convex and closed semialgebraic set. Let $f\in \rr[x]$ be a polynomial of degree $d>0$ of the form \eqref{funkcja f} and let $f_d$ be the leading form of $f$. Assume that $f_{d*} >0$. 

Then by Theorem \ref{convexonconvexsetnewvar}, we may effectively compute a real number $N\geq 1$ such that the function $\varphi_{N,\xi}(x)=e^{N|x-\xi|^2}f(x)$ for $\xi\in\rr^n$ is strongly convex on $X$. Moreover, $\varphi_{N,\xi}(x)\geq f(x)\geq m(f)|x|^d$ for $x\in X$, $|x|\geq \mathbb{K}(f)$, so we have
\begin{equation*}\label{factlim}
\lim_{x\in X,\,|x|\to\infty} \varphi_{N,\xi}(x)=+\infty.
\end{equation*}
Then we may uniquely determine the sequence 
\begin{equation}\label{eqanu2}
a_\nu :=\operatorname{argmin}_{X} \varphi_{a_{\nu -1}}\quad \hbox{for }\nu \geq 1.
\end{equation}
Analogous argument  as for Theorem \ref{approxi} gives the following theorem.

\begin{tw}\label{approxi2}
Let $a_\nu $ be a sequence defined by \eqref{eqanu2}, 
Then the limit $$
a_*=\lim_{\nu \rightarrow \infty } a_ \nu 
$$ 
exist and $a_* \in \Sigma _X f.$
\end{tw}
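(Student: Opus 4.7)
The plan is to reduce the unbounded case to the compact case treated in Theorem \ref{approxi}. The only genuinely new ingredient is an \emph{a priori} bound on the sequence $a_\nu$; after this, the proof of \cite[Theorem 6.5]{KS} and the three lemmas of the previous subsection transfer verbatim, with only the harmless change that the exponent in $\varphi_\xi$ is now $N|x-\xi|^2$ in place of $|x-\xi|^2$.

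First I would verify that the recursion $a_\nu = \argmin_X \varphi_{N,a_{\nu-1}}$ is well-defined. By Theorem \ref{convexonconvexsetnewvar}, $\varphi_{N,a_{\nu-1}}$ is $\mu$-strongly convex on $X$ with a modulus $\mu>0$ independent of $\nu$, and the pointwise bound $\varphi_{N,\xi}(x)\geq f(x)\geq m(f)|x|^d$, valid for $|x|\geq \mathbb{K}(f)$ by Lemma \ref{lemmamestvar}, forces it to be coercive on $X$. Strong convexity together with coercivity yields a unique minimizer in $X$.

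Next I would adapt Lemma \ref{lemma44x} to the present setting, obtaining
\[
f(a_{\nu+1}) \leq \frac{f(a_\nu) - \tfrac{\mu}{2}|a_{\nu+1}-a_\nu|^2}{e^{N|a_{\nu+1}-a_\nu|^2}},
\]
so that $f(a_\nu)$ is decreasing and therefore $f(a_\nu)\leq f(a_0)$ for every $\nu$. Combined with $f(x)\geq m(f)|x|^d$ for $|x|\geq \mathbb{K}(f)$, this delivers the key \emph{a priori} bound
\[
|a_\nu| \leq \max\bigl\{\mathbb{K}(f),\,(f(a_0)/m(f))^{1/d}\bigr\} =: R_0,
\]
so the whole sequence lies in the compact convex set $X_0 := X\cap \overline{B(0,R_0)}$. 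From this point on the proof proceeds exactly as in the compact case: the decreasing sequence $f(a_\nu)$ is bounded below by $\inf_{X_0} f$ and hence converges; the displayed monotonicity inequality then forces $|a_{\nu+1}-a_\nu|\to 0$; any accumulation point $a_*$ of $(a_\nu)$ in $X_0$ is shown to belong to $\Sigma_X f$ by the third lemma, because a putative direction of $X$-decrease at $a_*$ would, via the one-dimensional estimate, yield a definite drop $f(a_{\nu_k})-f(a_{\nu_k+1})\geq \delta>0$ along a subsequence, contradicting the convergence of $f(a_\nu)$; finally, strong convexity of $\varphi_{N,\xi}$ upgrades subsequential convergence to convergence of the whole sequence exactly as in \cite[Theorem 6.5]{KS}.

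The main obstacle is precisely the \emph{a priori} bound on $|a_\nu|$, which is the only step absent from the compact argument. It rests crucially on the coercivity of $f$ on $X$ provided by the hypothesis $f_{d*}>0$ via Lemma \ref{lemmamestvar}; were the leading form of $f$ allowed to vanish non-trivially, this bound could fail and the iteration could escape to infinity, which is exactly the situation that motivates the double-exponential convexification of Section \ref{secdoubleexp}.
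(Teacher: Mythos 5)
Your proposal is correct and follows the route the paper intends by ``analogous argument as for Theorem \ref{approxi}.'' The paper itself only sketches this, and the one genuinely new step you supply --- the a priori bound $|a_\nu|\leq R_0$ obtained by combining the monotonicity of $f(a_\nu)$ from Lemma \ref{lemma44x} with the coercivity estimate $f(x)\geq m(f)|x|^d$ of Lemma \ref{lemmamestvar} --- is exactly the ingredient needed to confine the iterates to a compact convex semialgebraic subset $X_0=X\cap\overline{B(0,R_0)}$ and invoke the compact case. You correctly identify that this is where the hypothesis $f_{d*}>0$ enters and why it cannot be dropped, in agreement with Remark \ref{generalcase}. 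Two small points worth being explicit about, though they do not affect the correctness of your argument: first, since every $a_\nu$ lies in $X_0\subset X$, the minimizer of the strongly convex $\varphi_{N,a_{\nu-1}}$ over $X$ coincides with its minimizer over $X_0$, so the recursion restricted to $X_0$ really does reproduce the same sequence; second, enlarging $R_0$ slightly ensures the limit $a_*$ lies in the relative interior of the ball, so a lower critical point of $f|_{X_0}$ is automatically a lower critical point of $f|_X$, which is what the statement asserts.
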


\begin{rem}\label{generalcase}
If $X\subset \rr^n$ is a closed and convex semialgebraic set and a polynomial $f\in\rr[x]$ is positive on $X$ and it is proper on $X$ (i.e., $\lim_{x\in X,\,|x|\to \infty}f(x)=+\infty$), then by Theorem \ref{convexonconvexsetnewxdoublevarsemi} one can repeat the argument from Theorem \ref{approxi} and obtain a sequence $a_\nu\in X$ such that $\lim_{\nu\to\infty}a_\nu=a_*\in\Sigma_Xf$.

If we assume only that $f(x)>0$ on $X$, 
 then the sequence $a_\nu$ can tend to infinity. Moreover, in the construction of $a_\nu$ we have to change $N$ step by step.
\end{rem}


\begin{thebibliography}{99}

\bibitem{Bolte} 
H. Attouch, J. Bolte, P. Redont, A. Soubeyran, \emph{Proximal alternating minimization and projection methods for nonconvex problems: an approach based on the Kurdyka-{\L}ojasiewicz inequality}. Math. Oper. Res. 35 (2010), no. 2, 438--457.
 
\bibitem{JPT}
G. Jeronimo, D. Perrucci, E. Tsigaridas, \emph{On the minimum of a polynomial function on a basic closed semialgebraic set and applications}. 
SIAM J. Optim. 23 (2013), no. 1, 241--255. 

\bibitem{KS} K. Kurdyka, S. Spodzieja, \emph{Convexifying positive polynomials and sums of squares approximation}. SIAM J. Optim. 25 (2015), no. 4, 2512--2536.

\bibitem{KS0}
K. Kurdyka, S. Spodzieja, A, Szlachci\'nska, \emph{Metric properties of semialgebraic mappings}. Discrete Comput. Geom. 55 (2016), no. 4, 786--800.

\bibitem {W}  A.Wilkie,  \emph{
Model completeness results for expansions of the ordered field of real numbers by restricted Pfaffian functions and the exponential function.}
J. Amer. Math. Soc. 9 (1996), no. 4, 1051--1094. 









\end{thebibliography}
\end{document}